\newtheorem{theorem}{Theorem}
\newtheorem{lemma}[theorem]{Lemma}
\newtheorem{corollary}[theorem]{Corollary}
\newtheorem{conjecture}[theorem]{Conjecture}
\newtheorem{proposition}[theorem]{Proposition}
\theoremstyle{definition}
\newtheorem{definition}[theorem]{Definition}
\newtheorem{remark}[theorem]{Remark}
\theoremstyle{remark}
\newcommand{\R}{\mathbb{R}}
\newcommand{\Z}{\mathbb{Z}}
\newcommand{\N}{\mathbb{N}}
\newcommand{\PP}{\mathbb{P}}
\newcommand{\EE}{\mathbb{E}}
\newcommand{\cc}{c}
\newcommand{\cb}{\breve{c}}
\newcommand{\tnu}{\nu}
\newcommand{\bnu}{\breve{\nu}}
\renewcommand{\Im}{\operatorname{Im}}
\newcommand{\ind}{\mathbf{1}}
\newcommand{\darrow}{\xrightarrow{\mathrm{d}}}
\newcommand{\varrow}{\xrightarrow{\mathrm{v}}}
\newcommand{\corrarrow}{\xrightarrow{\mathrm{corr}}}
\newcommand{\marrow}{\xrightarrow{\mathrm{m}}}
\newcommand{\LL}{\mathcal{L}}
\DeclareMathOperator{\supp}{supp}
\newcommand{\zero}{\widetilde\gamma}
    \title[On convergence of points]{On convergence of points to limiting processes, with an application to zeta zeros}
    \author{J. Arias de Reyna, B. Rodgers}
    \date{}
\begin{document}

\begin{abstract}
This paper considers sequences of points on the real line which have been randomly translated, and provides conditions under which various notions of convergence to a limiting point process are equivalent. In particular we consider convergence in correlation, convergence in distribution, and convergence of spacings between points. We also prove a simple Tauberian theorem regarding rescaled correlations. The results are applied to zeros of the Riemann zeta-function to show that several ways to state the GUE Hypothesis are equivalent. The proof relies on a moment bound of A. Fujii.
\end{abstract}

\maketitle
\thispagestyle{empty}

\section{Introduction}
\label{sec:intro}

\subsection{Statistics of zeros}
The purpose of this note is to fill a gap in the literature, demonstrating that various notions which have been used to describe the conjectured spacing of zeros of the Riemann zeta-function are equivalent. We will phrase our results in a general language, so that they apply to generic sequences of points meeting minimal conditions -- these more general results are presented below, after a discussion of spacings of zeta zeros. Many ideas we use are understood by experts in the theory of point processes, but it can be hard to find a single reference explaining matters, especially in the context of for instance zeta zeros. And as we will see the fact that these notions are equivalent does depend on the zeta zeros satisfying some regularity properties; the equivalences will not be completely generic to any collection of points.

We discuss zeros of the Riemann zeta-function first. As usual we denote by $\rho_n = \sigma_n + i \gamma_n$ for $n\in \mathbb{Z}$ the nontrivial zeros of $\zeta(s)$, where $\sigma_n \in (0,1)$ and $\gamma_n \in \R$ with $\gamma_n \leq \gamma_{n+1}$ and $\gamma_n > 0$ for $n > 0$ and $\gamma_n \leq 0$ otherwise. (The zeros are conjectured to be simple, but we list them according to multiplicity if that is not true.) The Riemann Hypothesis is the claim that $\sigma_n = 1/2$ for all $n$. The ordinates $\gamma_n$ near a value $T \geq 0$ have density roughly $\log T/2\pi$ in the sense that \cite[Thm. 9.4]{T}
\begin{equation}
\label{eq:Riemann_vonMangoldt}
| \{n:\; \gamma_n \in (0,T]\} | \sim T \frac{\log T}{2\pi}.
\end{equation}

The GUE Hypothesis considers the rescaled distribution of ordinates $\gamma_n$ near a random value $t \in [T,2T]$. We write it in the following form:

\begin{conjecture}[GUE Hypothesis]
\label{conj: GUE_correlations}
For any fixed integer $k \geq 1$ and any fixed $f \in C_c(\R^k)$,
\begin{multline}
\label{eq: GUE}
\lim_{T\rightarrow\infty} \frac{1}{T} \int_T^{2T} \sum_{n \in \Z_\ast^k} f\Big(\frac{\log T}{2\pi}(\gamma_{n_1}-t),\, \cdots ,\frac{\log T}{2\pi}(\gamma_{n_k}-t)\Big)\, dt \\
= \int_{\R^k} f(x_1,...,x_k) \det_{1 \leq i, j \leq k}\Big( S(x_i - x_j)\Big)\, d^k x,
\end{multline}
where $S(x) = \frac{\sin \pi x}{\pi x}$.
\end{conjecture}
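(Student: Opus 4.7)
Conjecture \ref{conj: GUE_correlations} is the correlation form of the GUE Hypothesis of Montgomery--Dyson, and no unconditional proof is currently known. I will sketch the standard template that underlies all known partial progress (Montgomery for $k=2$; Rudnick--Sarnak for general $k$, both under a Fourier-support restriction on $f$), since any direct attack must go through it. As the abstract indicates, the paper at hand will not attempt a proof of the conjecture itself but rather, using general point-process arguments together with a moment bound of Fujii, show this formulation equivalent to other standard formulations.

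The plan would be to apply the Riemann--Weil explicit formula in each of the $k$ variables, converting the sum over $k$-tuples of zeros on the left of \eqref{eq: GUE} into a sum over $k$-tuples of prime powers weighted by $\widehat{f}$ and averaged over $t\in[T,2T]$. The $t$-average enforces a vanishing condition on a linear combination of the logarithms of the prime powers, killing all but the ``resonant'' configurations.

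Next I would group the surviving configurations according to the partition of the $k$ indices into blocks of coinciding prime powers. By partial summation and the prime number theorem, each such block contributes a term matching a cumulant of the sine-kernel translation-invariant point process; a classical combinatorial identity then reassembles these cumulants into the determinantal integrand $\det_{i,j}(S(x_i-x_j))$, producing the right side after inverse Fourier transform. Along the way one must control the average number of zeros in short intervals, which is exactly where a Fujii-type moment bound enters.

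The hard step is bounding the off-diagonal (non-resonant) contributions. For $\widehat{f}$ supported in a sufficiently small box, Selberg-type mean-square estimates for prime sums suffice. For arbitrary $f\in C_c(\R^k)$, however, the error involves $k$-point correlations of the von Mangoldt function over long ranges and is essentially equivalent to the Hardy--Littlewood prime $k$-tuple conjectures. This is the principal obstacle, and the reason the paper sidesteps it by establishing equivalences among reformulations rather than attempting the conjecture directly.
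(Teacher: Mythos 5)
The statement you were asked about is Conjecture~\ref{conj: GUE_correlations}, and the paper offers no proof of it --- it is an open conjecture, stated only so that the authors can prove it \emph{equivalent} to other formulations (Theorem~\ref{thm: main_zetazeros}). You correctly recognized this, and your sketch of the known partial progress is essentially accurate: Montgomery's $k=2$ result and the Rudnick--Sarnak extension to higher correlations both proceed via the explicit formula, isolate diagonal/resonant prime-power configurations, and are valid only for test functions whose Fourier transform is supported in a restricted region; removing that restriction runs into correlations of the von Mangoldt function of Hardy--Littlewood type. One small correction: Fujii's moment bound does not enter the Rudnick--Sarnak correlation computation in the way you suggest; in this paper it is used only to upgrade convergence in correlation to convergence in distribution (Theorem~\ref{thm:tail_bound_distributions} via Corollary~\ref{cor:Fujii_cor2}), which is a separate matter from establishing \eqref{eq: GUE} itself. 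Since no proof exists to compare against, there is nothing further to adjudicate.
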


Here $C_c(\R^k)$ is the space of compactly supported continuous functions and
$$
\Z_\ast^k = \{ (n_1,...,n_k) \in \Z^k: \; n_1, ... ,n_k \textrm{ are all distinct.}\}.
$$

This conjecture is about the random collection of points $\{\frac{\log T}{2\pi}(\gamma_n-t)\}$, where randomness has come from choosing $t$ uniformly from the interval $[T,2T]$. A configuration of points which has been chosen randomly can be formalized by the notion of a point process; see Section \ref{sec: point processes} below. There we explain that the right hand side of \eqref{eq: GUE} for all $k$ are the correlation functions of a limiting point process called the sine-kernel determinantal point process, or \emph{sine-kernel process} for short. When the relation \eqref{eq: GUE} holds for point processes such as $\{\frac{\log T}{2\pi}(\gamma_n-t)\}$ with $t \in [T,2T]$ as $T\rightarrow\infty$, we will say that these point processes \emph{converge in correlation} to the sine-kernel process as $T\rightarrow\infty$. A form of this conjecture first appeared in \cite[Eq. (15)]{M}. Further discussion can be found in \cite{RS}, \cite{KS}, or \cite{R}.

It is natural to consider a rescaling of the ordinates $\gamma_k$ to have mean unit spacing. A popular rescaling is as follows:
$$
\zero_n:= \frac{1}{\pi} \vartheta(\gamma_n), \quad \vartheta(t):= \Im(\log \Gamma(\tfrac{1}{4} + i \tfrac{t}{2})) - \tfrac{t}{2}\log \pi.
$$
By Stirling's formula $\frac{1}{\pi} \vartheta(t) \sim t \frac{\log t}{2\pi}$ (see \cite[Ch. 14]{MV}) and \eqref{eq:Riemann_vonMangoldt} implies $\gamma_n \sim 2\pi n /\log n$, so indeed $\zero_n \sim n$ and the points $\zero_n$ asymptotically have density $1$. It is reasonable to suppose that the points $\{\zero_n-t\}$ for $t \in (0,T]$ also tend in correlation to the sine-kernel process as $T\rightarrow\infty$, that is:
\begin{multline}
\label{eq: GUE_rescaled}
\lim_{T\rightarrow\infty} \frac{1}{T} \int_0^{T} \sum_{n \in \Z_\ast^k} f(\zero_{n_1}-t,\, \cdots , \zero_{n_k}-t)\, dt \\
= \int_{\R^k} f(x_1,...,x_k) \det_{1 \leq i, j \leq k}\Big( S(x_i - x_j)\Big)\, d^k x,
\end{multline}
for all fixed $k\geq 1$ and all fixed $f \in C_c(\R^k)$. One purpose of this note is to record the proof that \eqref{eq: GUE_rescaled} is equivalent to the relation \eqref{eq: GUE} in the GUE Hypothesis and formulate this result in a slightly more general context.

Convergence in correlation is not however the only notion for convergence of point processes. We also have a notion of \emph{convergence in distribution} for the point process $\{\tilde{\gamma}_n-t\}$ for $t \in [0,T]$ to the sine-kernel process. By this we mean for any $k\geq 1$ and any collection of closed intervals $I_1, ..., I_k$ and non-negative integers $\lambda_1, ..., \lambda_k$, the limits 
\begin{equation}
\label{eq:zeta_cylinders}
\lim_{T\rightarrow\infty} \frac{1}{T} \int_0^T \Big\llbracket | \{n:\; \zero_n-t \in I_1\}| = \lambda_1,\, \cdots, | \{n:\; \zero_n-t \in I_k\}| = \lambda_k \Big\rrbracket \, dt
\end{equation}
exist and tend to the probability that $\lambda_1$ points of the sine-kernel process lie in $I_1$, ..., $\lambda_k$ points lie in $I_k$. (This probability is given by \eqref{eq:sine_kernel_cylinders} later. See Sec. \ref{sec: zeta zeros} for further discussion.) Here, following Knuth \cite{K}, we are using $\llbracket P \rrbracket$ as the indicator of a proposition $P$; we have $\llbracket P \rrbracket = 1$ or $0$ depending on whether $P$ is true or false. If $S$ is a finite set, $|S|$ indicates the number of elements in $S$.

Another statistic which has sometimes been studied is the joint distribution of spacings between zeros \cite{O, O2}. In this case one fixes $K \geq 1$ and for a random integer $n$ sampled uniformly from $1$ to $N$ computes the joint distribution of the random variables $(\zero_{n+1}-\zero_n)$,  $(\zero_{n+2}-\zero_n),$ ..., $(\zero_{n+K}-\zero_n)$. We say that \emph{the joint distribution of the spacings between zeros $\{\zero_n\}$ tends to the joint distribution of spacings between points in the sine-kernel process} if for all $K$ this random vector tends as $N\rightarrow\infty$ to the analogous random vector of spacings between points of the sine-kernel process. The proper way to define this analogous quantity for the sine-kernel process is done through the Palm process, discussed in Section \ref{sec: gaps} and Section \ref{subsec: spacings}; for now the reader needs only understand that some limiting joint distribution is defined, via the sine-kernel process.

We show that these notions are equivalent:

\begin{theorem}
\label{thm: main_zetazeros}
The following are equivalent:
\begin{enumerate}[label = (\roman*)]
\item The GUE Hypothesis (that is, for $t \in [T,2T]$ chosen uniformly at random, the point processes $\{\frac{\log T}{2\pi}(\gamma_n-t)\}$ tend in correlation to the sine-kernel process as $T\rightarrow\infty$).
\item The point processes $\{\zero_n-t\}$, for $t \in (0,T]$ chosen uniformly at random, tend in correlation to the sine-kernel process as $T\rightarrow\infty$.
\item The point processes $\{\zero_n-t\}$, for $t \in (0,T]$ chosen uniformly at random, tend in distribution to the sine-kernel process as $T\rightarrow\infty$. (That is \eqref{eq:zeta_cylinders} is always equal to \eqref{eq:sine_kernel_cylinders}.)
\item The joint distribution of spacings between zeros $\{\zero_n\}$ tends to the joint distribution of spacings between points in the sine-kernel process. (See Section \ref{subsec: spacings} for a more precise statement.)
\end{enumerate}
\end{theorem}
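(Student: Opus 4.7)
The plan is to establish the chain $(i) \Leftrightarrow (ii) \Leftrightarrow (iii) \Leftrightarrow (iv)$, in each step reducing the statement about zeta zeros to a general result on point processes developed earlier in the paper, supplemented by facts about the zeros (Riemann--von Mangoldt, Stirling) and, crucially, by the moment bound of A.~Fujii referenced in the abstract.

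For $(i) \Leftrightarrow (ii)$, the transition involves both a change of averaging range and a change of local rescaling. I would substitute $t = \vartheta(s)/\pi$, so that $\zero_n - t = \tfrac{1}{\pi}(\vartheta(\gamma_n) - \vartheta(s))$; by Taylor expansion and the Stirling asymptotic $\vartheta'(s)/\pi \sim \tfrac{\log s}{2\pi}$, and because $f\in C_c(\R^k)$ forces only $\gamma_{n_j}$ within $O(1/\log s)$ of $s$ to contribute, this equals $\tfrac{\log s}{2\pi}(\gamma_n - s)$ up to a vanishing error. The Jacobian $\vartheta'(s)/\pi$ matches the normalisations, and the passage between dyadic averaging on $[T,2T]$ and Cesaro averaging on $(0,T]$ is exactly the content of the rescaled-correlations Tauberian theorem promised in the abstract: one direction is automatic from the identity $\frac{1}{T}\int_T^{2T} = 2 \cdot \frac{1}{2T}\int_0^{2T} - \frac{1}{T}\int_0^T$, the other requires summation over dyadic blocks together with the Tauberian step.

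For $(ii) \Leftrightarrow (iii)$ the forward implication (distribution $\Rightarrow$ correlation) is general once the first few factorial moments are controlled. The reverse is the genuine content, since two point processes with identical correlation functions can have different laws if the correlations grow too rapidly. The inclusion--exclusion identity
$$\PP(\text{no point in } I) = \sum_{k=0}^\infty \frac{(-1)^k}{k!}\int_{I^k} \rho_k(x_1,\ldots,x_k)\,d^kx,$$
and its analogues for general cylinder events, require a uniform-in-$T$ bound on the $k$-point correlations to converge absolutely and to be invertible. Fujii's moment bound furnishes exactly such an estimate on $\EE\bigl[|\{n:\zero_n - t\in I_1\}|^{a_1}\cdots |\{n:\zero_n - t\in I_m\}|^{a_m}\bigr]$, and the general equivalence theorem proved earlier in the paper then applies to close the loop.

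The equivalence $(iii) \Leftrightarrow (iv)$ is Palm-theoretic: averaging over points $\zero_n$ with $n$ uniform in $\{1,\ldots,N\}$ corresponds, in the limit, to the Palm distribution of the stationary point process appearing in $(iii)$, and the Palm distribution and the original distribution determine one another. Fujii's bound again delivers the tightness needed to make this correspondence rigorous in the limit. The principal obstacle is $(ii) \Rightarrow (iii)$: promoting a pointwise identity of correlation functions to an identity of joint laws of counts requires a uniform moment hypothesis, and it is the content of Fujii's estimate that makes this upgrade available for the zeta zeros rather than merely an artifact of soft general theory.
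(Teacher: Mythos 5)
Your overall architecture matches the paper's: the chain $(i)\Leftrightarrow(ii)$ via the substitution $t=\vartheta(s)/\pi$ plus a Tauberian passage between dyadic and full averaging (this is Lemma \ref{lem: avg_equiv} and Theorem \ref{prop: respace_equiv}), $(ii)\Leftrightarrow(iii)$ via the moment method, and $(iii)\Leftrightarrow(iv)$ via Palm theory (Theorem \ref{thm:spacing_to_palm}). However, in the step $(ii)\Leftrightarrow(iii)$ you have reversed the roles of the two essential inputs, and this is a genuine gap rather than a cosmetic one. The implication that actually requires Fujii's bound is $(iii)\Rightarrow(ii)$: convergence in distribution of the counts $|\{n:\zero_n-t\in I\}|$ only yields convergence of their moments (hence of correlations) after a uniform integrability argument, and the uniform-in-$T$ bound $\frac{1}{T}\int_0^T|\{j:\zero_j-t\in I\}|^k\,dt\leq C_{I,k}$ of Corollary \ref{cor:Fujii_cor2} is exactly what supplies it (Theorem \ref{thm:tail_bound_distributions}). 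You call this direction ``general once the first few factorial moments are controlled,'' which both understates what is needed (all orders, uniformly in $T$) and misses that this is the one place where nontrivial number-theoretic input enters.

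Conversely, for $(ii)\Rightarrow(iii)$ Fujii's bound is not needed and, as you have set things up, would not suffice. Convergence of correlations already forces each fixed moment of the approximating processes to be bounded in $T$; the obstruction to upgrading correlation convergence to distributional convergence is instead that the \emph{limit} might fail to be determined by its moments. The missing ingredient in your argument is therefore the moment-determinacy of the sine-kernel process, which the paper derives from the exponential tail bound $\PP(\varsigma(I)\geq t)\leq Ce^{-ct}$ (Proposition \ref{prop:sinekernel_tailbound}, ultimately Hadamard's inequality $0\leq\det(S(x_i-x_j))\leq 1$), fed into Theorem \ref{thm:tail_bound_correlations}. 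Your inclusion--exclusion series does encode the right idea, but its absolute convergence and invertibility in the limit require the bound $C(I,a)\leq|I_1|^{a_1}\cdots|I_k|^{a_k}$ on the limiting correlations; Fujii's estimate, with constants $C_{I,k}$ of unspecified growth in $k$, does not deliver this. Finally, your appeal to Fujii for ``tightness'' in $(iii)\Leftrightarrow(iv)$ is also misplaced: that equivalence is generic Palm theory, needing only that the sine-kernel process is simple and stationary with intensity $1$ and that $N_{\zero}(T)\sim T$.
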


In fact the equivalence between \emph{(i)} and \emph{(ii)} depends only on the density of zeros being slowly varying and the rescaling to $\zero_n$ being relatively regular; see Section \ref{sec: respacing}. Moreover the equivalence between \emph{(iii)} and \emph{(iv)} is almost completely generic and relies only on a simple property of the limiting distribution; see Section \ref{sec: gaps}. By contrast, number theorists may find it surprising that technical information about the distribution of zeros is necessary to pass from \emph{(ii)} to \emph{(iii)} (though probabilists may be less surprised). What is required is a moment bound due to A. Fujii \cite{F, F1} for counts of zeros in small random intervals; see Section \ref{subsec:moment_zetazeros}.

\begin{remark}
The averages over the intervals $[T,2T]$ in \emph{(i)} or $(0,T]$ in \emph{(ii)} and \emph{(iii)} could be equivalently replaced in each instance by an average over an interval $[\alpha T, \beta T]$ for fixed $0 \leq \alpha < \beta$, though we do not pursue this more general statement here.
\end{remark}

\subsection{General point processes}

Let us now discuss the general results Theorem \ref{thm: main_zetazeros} is based on. We formulate these results in the language of point processes, reviewed in Section \ref{sec: point processes}. We hope these results will be useful even for readers without an interest in the Riemann zeta-function. Because the results depend on a formalism and notation introduced in Section \ref{sec: point processes} to follow, we content ourselves here to informal statements of the main results with references to more exact statements in the text.

In Section \ref{sec: point processes} we draw out equivalences between various notions of convergence for point processes. In particular, in Theorem \ref{thm:vectorconv} we show under minimal conditions that convergence in distribution of point processes $\xi_\ell$ to a limit point process $\xi$ is equivalent to the convergence of finite dimensional distributions of labelings of $\xi_\ell$ to labelings of $\xi$. Furthermore in Theorem \ref{thm:tail_bound_correlations} we show that convergence in correlation of the processes $\xi_\ell$ to a limiting process $\xi$ implies convergence in distribution, given very mild conditions on the processes $\xi_\ell$ and a tail bound for the probability a large number of points in $\xi$ cluster in a compact interval. In the opposite direction, in Theorem \ref{thm:tail_bound_distributions} we show that uniform moment bounds on counts of points of $\xi_\ell$ are sufficient to see that convergence in distribution implies convergence in correlation. Both Theorems \ref{thm:tail_bound_correlations} and \ref{thm:tail_bound_distributions} are based on the moment method.

In Section \ref{sec: respacing} we prove Tauberian theorems which allow one to show that the correlation functions of points have the same limit under sufficiently regular rescaling. In particular Theorem \ref{thm: respace_equiv} is a generalization of the claim that the correlation functions points $\{\tfrac{\log T}{2\pi}(\gamma_n-t)\}$ and the points $\{\zero_n-t\}$ will have the same limit as long as a limit exists.

Finally, in Section \ref{sec: gaps} we show in Theorem \ref{thm:spacing_to_palm} that in the case that the limiting point processes is simple, convergence in distribution of a point process induced by randomly translating points is equivalent to the convergence of finite dimensional distributions of gaps. In this case the distribution of gaps is described through Palm processes; we recall the definition and provide the necessary background in this section.

We finally describe how these results apply to zeta zeros in Section \ref{sec: zeta zeros}.

\subsection{Acknowledgments} B.R. received partial support from an NSERC grant. We thank Jeff Lagarias, Anurag Sahay, and an anonymous referee for comments and feedback.

\section{Point processes: fundamental background}
\label{sec: point processes}

The proper framework to treat the problems in this paper is that of point processes. Intuitively, a point process on the real line is just a configuration of points $\{...,x_{-1},x_0,x_1,...\} \subset \mathbb{R}$ which has been laid down randomly. A point processes in turn is best thought of as a special sort of random measure. We therefore recall the formalism of random measures first.

In this paper we will work with random measures of the real line $\mathbb{R}$, where $\R$ is endowed with its usual topology. There is a more general theory of random measures defined on any Polish space (that is, any topological space which is separable, metrizable, and complete); see \cite{Ka1} for a comprehensive account.

\subsection{Locally finite measures}
\label{subsec: measures}

In this subsection we set up the space of measures from which a random measure will be drawn. To begin with, we recall that $\mathbb{R}$ is a Polish space, and let $\mathcal{R}$ be the Borel $\sigma$-algebra of $\mathbb{R}$. A Borel measure $\mu$ on $\mathbb{R}$ is said to be locally finite (or Radon) if $\mu(B) < +\infty$ for any bounded $B \in \mathcal{R}$. We let $\mathfrak{M}$ be the class of all locally finite Borel measures on $\mathbb{R}$. 

We endow $\mathfrak{M}$ with the \emph{vague topology} $\tau$, generated by the maps $\nu \mapsto \int f\, d\nu$ for $f \in C_c(\mathbb{R})$, so that for a sequence of $\nu_1, \nu_2,...\in \mathfrak{M}$ we have $\nu_\ell$ converges to $\nu \in \mathfrak{M}$ (written $\nu_\ell \varrow \nu$) if and only if we have the convergence of real numbers $\int f \, d\nu_\ell \rightarrow \int f\, d\nu$ for all $f \in C_c(\mathbb{R})$. (Here $C_c(\mathbb{R})$ denotes the collection of continuous and compactly supported functions.) It is known that $\mathfrak{M}$ with the vague topology is a Polish space (see \cite[15.7.7]{Ka1}). We label $\mathcal{M}$ the Borel $\sigma$-algebra of $\mathfrak{M}$ generated by the vague topology $\tau$. 

Later we will need the following alternative characterization of convergence in the vague topology. As usual we let $\partial B$ denote the boundary of a set $B \subset \mathbb{R}$.

\begin{proposition} (Portmanteau theorem for vague convergence)
\label{prop:vague}
For measures $\nu, \nu_1, \nu_2,... \in \mathfrak{M}$, the following are equivalent: 
\begin{enumerate}[label = (\roman*)]
	\item $\nu_\ell \varrow \nu$.
	\item $\nu_\ell(B) \rightarrow \nu(B)$ for all bounded $B \in \mathcal{R}$ such that $\nu(\partial B) = 0$.
	\item $\limsup_{\ell\rightarrow \infty} \nu_\ell(F) \leq \nu(F)$ for all bounded closed $F \in \mathcal{R}$, and \sloppy $\liminf_{\ell\rightarrow\infty} \nu_\ell(G) \geq \nu(G)$ for all bounded open $G \in \mathcal{R}$.
\end{enumerate}
\end{proposition}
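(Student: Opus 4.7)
The plan is to establish the equivalence by proving the cyclic chain \emph{(i)} $\Rightarrow$ \emph{(iii)} $\Rightarrow$ \emph{(ii)} $\Rightarrow$ \emph{(i)}. The only real content beyond standard Portmanteau arguments for probability measures is to keep careful track of local finiteness and to confine all constructions to bounded sets so that the integrands remain in $C_c(\mathbb{R})$.

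For \emph{(i)} $\Rightarrow$ \emph{(iii)} I would sandwich indicators by compactly supported continuous functions. Given a bounded closed $F$, choose a bounded open neighborhood $U \supset F$ with $\overline{U}$ compact and set $\phi_n(x) = \max(0, 1 - n\, d(x,F))$ cut off by a fixed bump that equals $1$ on $U$; then $\phi_n \in C_c(\mathbb{R})$, $\phi_n \geq \mathbf{1}_F$, and $\phi_n \downarrow \mathbf{1}_F$ pointwise. Applying \emph{(i)} gives
$$
\limsup_\ell \nu_\ell(F) \leq \limsup_\ell \int \phi_n\, d\nu_\ell = \int \phi_n\, d\nu,
$$
and dominated/monotone convergence (using $\int \phi_1\, d\nu < \infty$ by local finiteness) yields $\int \phi_n\, d\nu \to \nu(F)$. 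For a bounded open $G$ I would instead take $\phi_n(x) = \min(1, n\, d(x, G^c))$, which is supported in the compact set $\overline{G}$ and satisfies $\phi_n \uparrow \mathbf{1}_G$, giving $\liminf_\ell \nu_\ell(G) \geq \liminf_\ell \int \phi_n\, d\nu_\ell \to \nu(G)$.

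The step \emph{(iii)} $\Rightarrow$ \emph{(ii)} is routine: for bounded $B$ with $\nu(\partial B)=0$, put $F = \overline{B}$ and $G = B^\circ$, so $\nu(F)=\nu(G)=\nu(B)$, and then sandwich $\nu_\ell(G) \leq \nu_\ell(B) \leq \nu_\ell(F)$ before passing to the limit. For \emph{(ii)} $\Rightarrow$ \emph{(i)}, I would reduce to integrating $f \in C_c(\mathbb{R})$ with $\supp f \subset [a,b]$ against step functions on a \emph{good} partition. Since $\{x : \nu(\{x\}) > 0\}$ is countable, I can pick points $a - \varepsilon = t_0 < t_1 < \cdots < t_N = b + \varepsilon$ with $\max(t_i - t_{i-1}) < \delta$ and $\nu(\{t_i\}) = 0$; setting $B_i = (t_{i-1}, t_i]$ gives $\nu(\partial B_i) = 0$. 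With $g = \sum_i f(t_{i-1})\mathbf{1}_{B_i}$, uniform continuity of $f$ gives $\|f - g\|_\infty \leq \omega_f(\delta)$, and \emph{(ii)} applied to $[a-\varepsilon, b+\varepsilon]$ shows that $\nu_\ell([a-\varepsilon, b+\varepsilon])$ is uniformly bounded, so
$$
\Big| \int f\, d\nu_\ell - \int g\, d\nu_\ell \Big| \leq \omega_f(\delta) \cdot \sup_\ell \nu_\ell([a-\varepsilon, b+\varepsilon]).
$$
Since \emph{(ii)} also gives $\int g\, d\nu_\ell \to \int g\, d\nu$, letting $\ell \to \infty$ and then $\delta \to 0$ yields the desired vague convergence.

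The main technical obstacle I anticipate is not any single step but the bookkeeping needed to ensure everything stays compactly supported and that the total masses $\nu_\ell$ assign to the relevant bounded sets stay bounded. Unlike the classical Portmanteau theorem for probability measures, here one cannot take $f \equiv 1$ as a test function, so uniform tightness on a single bounded set must be extracted from \emph{(i)} or \emph{(ii)} itself by choosing an enlarged interval with $\nu$-null boundary. Once that observation is in place, the three implications follow by the standard approximation recipe outlined above.
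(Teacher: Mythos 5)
Your proof is correct. Note, however, that the paper does not actually prove this proposition: it simply cites Kallenberg's \emph{Random Measures} (the reference \cite[15.7.2]{Ka1}), so there is no internal argument to compare against. What you supply is a complete, self-contained proof following the standard Portmanteau template, correctly adapted to the vague topology on locally finite measures. The adaptations you make are exactly the right ones: in \emph{(i)} $\Rightarrow$ \emph{(iii)} the approximating functions $\max(0,1-n\,d(x,F))$ and $\min(1,n\,d(x,G^c))$ are automatically compactly supported because $F$ and $\overline{G}$ are bounded, and local finiteness of $\nu$ justifies the dominated convergence step; in \emph{(ii)} $\Rightarrow$ \emph{(i)} you correctly identify the one place where the classical probability-measure argument breaks down (one cannot test against $f\equiv 1$) and repair it by choosing the enlarged interval $[a-\varepsilon,b+\varepsilon]$ with $\nu$-null endpoints, so that \emph{(ii)} itself yields the uniform bound $\sup_\ell \nu_\ell([a-\varepsilon,b+\varepsilon])<\infty$ needed to control the step-function approximation error. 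The only cosmetic remark is that the ``cut off by a fixed bump'' in the closed-set case is superfluous, since $\{x: d(x,F)\leq 1/n\}$ is already compact. Your argument has the merit of making the paper self-contained at this point, at the cost of a page of routine but necessary bookkeeping that the authors chose to outsource.
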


\begin{proof}
This is \cite[15.7.2]{Ka1}.
\end{proof}

\subsection{Random measures}
\label{subsec: random measures}

A \emph{random measure on $\mathbb{R}$} is a random element of $(\mathfrak{M},\mathcal{M})$. We now state some important facts about random measures and especially their convergence.

As usual we refer to random elements of $(\mathbb{R},\mathcal{R})$ as random variables, and random elements of $(\mathbb{R}^d,\mathcal{R}^d)$ as random vectors. 

In general, we will use the following notation for distributional convergence: if $X, X_1, X_2,...$ are random elements of a metric space $S$ with Borel $\sigma$-algebra $\mathcal{S}$ we say that $X_\ell$ tends to $X$ in distribution and write $X_\ell \darrow X$ if there is the convergence of real numbers 
$$
\EE\, F(X_\ell) \rightarrow \EE\, F(X),
$$
for any $F \in C_b(S)$, where $C_b(S)$ is the collection of bounded continuous functions $F: S\rightarrow\mathbb{R}$. (If $T$ is a continuous parameter, we define $X_T \darrow X$ in the same way.) This definition thus characterizes convergence in distribution of random variables, random vectors, and random measures.

The following result gives useful alternative criteria for convergence in distribution of random measures.

\begin{theorem}[Generalized Portmanteau theorem for random measures]
\label{thm:distconv}
For $\mu, \mu_1, \mu_2,...$ random measures on $\mathbb{R}$, the following statements are equivalent:
\begin{enumerate}[label = (\roman*)]
\item We have convergence in distribution of random measures, $\mu_\ell \darrow \mu$.
\item For all $f \in C_c(\mathbb{R})$, we have the convergence in distribution of real-valued random variables, 
$$
\int f\, d\mu_\ell \darrow \int f\, d\mu.
$$
\end{enumerate}
In addition if almost surely $\mu(\{a\})=0$ for any point $a \in \mathbb{R}$, (i) and (ii) are equivalent to
\begin{enumerate}[label = (\roman*)]
\setcounter{enumi}{2}
\item For any $k$ and any collection of closed intervals $I_1 = [a_1,b_1],...,I_k=[a_k,b_k]$, we have the convergence in distribution of random vectors
$$
(\mu_\ell(I_1),...,\mu_\ell(I_k)) \darrow (\mu(I_1),...,\mu(I_k)) .
$$
\end{enumerate}
\end{theorem}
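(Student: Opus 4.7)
The plan is to prove (i)$\Leftrightarrow$(ii) in full generality and then leverage the atomless hypothesis for the equivalence with (iii). The direction (i)$\Rightarrow$(ii) is immediate from the continuous mapping theorem, since the vague topology on $\mathfrak{M}$ is by definition the coarsest topology making each evaluation $\nu \mapsto \int f\, d\nu$ continuous for $f \in C_c(\R)$. For (ii)$\Rightarrow$(i), I would specialize (ii) to nonnegative $f \in C_c(\R)$ and apply the bounded continuous function $x \mapsto e^{-x}$ on $[0,\infty)$ to obtain convergence of the Laplace functionals $\mathbb{E}\, e^{-\int f\, d\mu_\ell} \to \mathbb{E}\, e^{-\int f\, d\mu}$ for every nonnegative $f \in C_c(\R)$. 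Since a random measure on a Polish space is characterized by its Laplace functional, and convergence of Laplace functionals on the nonnegative cone of $C_c$ is equivalent to distributional convergence of random measures \cite[Thm 4.11]{Ka1}, this yields (i).

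For (i)$\Rightarrow$(iii) under the atomless hypothesis on $\mu$: the joint evaluation map $\nu \mapsto (\nu(I_1),\ldots,\nu(I_k))$ is generally discontinuous on $\mathfrak{M}$, but by the Portmanteau theorem for vague convergence (Proposition \ref{prop:vague}) it is continuous at every $\nu$ with $\nu(\partial I_j) = 0$ for all $j$. The atomless hypothesis gives $\mu(\partial I_j) = 0$ almost surely (since $\partial I_j$ consists of at most two points), so the extended continuous mapping theorem delivers (iii). For (iii)$\Rightarrow$(ii), I would approximate any given $f \in C_c(\R)$ uniformly to precision $\varepsilon$ by a linear combination $\sum_{j=1}^K c_j \mathbf{1}_{I_j}$ of indicators of closed intervals forming a partition of a box $[-M,M]$ containing $\supp f$; then (iii) applied jointly to the $(I_j)$ yields distributional convergence of the approximating functionals, while (iii) applied to the singleton ``intervals'' $\{a_j\}$ at the partition endpoints shows $\mu_\ell(\{a_j\}) \darrow \mu(\{a_j\}) = 0$, so any double-counting of mass at shared endpoints is negligible in the limit. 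Combining this with the tightness of $\mu_\ell([-M,M])$ (also obtained from (iii)) and a standard $\varepsilon \to 0$ argument gives (ii).

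The main obstacle is the implication (ii)$\Rightarrow$(i), which is nontrivial because it requires upgrading distributional convergence of each scalar functional to convergence of $\mu_\ell$ as a random element of the infinite-dimensional space $\mathfrak{M}$; this hinges on the substantive fact that distributions of random measures on Polish spaces are determined by their Laplace functionals. By contrast, the implications involving (iii) rely only on the continuous mapping theorem and approximation, with the atomless hypothesis doing its essential work by ensuring that the relevant evaluation maps are continuous at $\mu$ almost surely.
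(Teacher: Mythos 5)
Your outline is correct, but note that the paper itself offers no argument at all here: it simply observes that all three equivalences are a special case of Theorem 4.2 of Kallenberg's \emph{Random Measures} \cite{Ka1} (whose statement already includes the $f$-functional criterion and the finite-dimensional criterion for sets $B$ with $\mu(\partial B)=0$ a.s.). What you have written is essentially the standard proof of that theorem, and the individual steps are sound: (i)$\Rightarrow$(ii) is the continuous mapping theorem applied to the defining maps of the vague topology; (i)$\Rightarrow$(iii) correctly uses Proposition \ref{prop:vague} plus the a.s.\ continuity of $\nu\mapsto(\nu(I_1),\dots,\nu(I_k))$ at atomless $\mu$; and (iii)$\Rightarrow$(ii) by step-function approximation works, with the endpoint masses and the tightness of $\mu_\ell([-M,M])$ handled as you describe. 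The one caveat worth flagging is that your treatment of the genuinely hard implication (ii)$\Rightarrow$(i) is not really more self-contained than the paper's citation: reducing to Laplace functionals via $x\mapsto e^{-x}$ is fine, but the assertion that convergence of $\EE\, e^{-\int f\,d\mu_\ell}$ for all nonnegative $f\in C_c(\R)$ implies $\mu_\ell\darrow\mu$ (including the implicit tightness of the sequence in $\mathfrak{M}$) is precisely the substantive content of the Kallenberg theorem being invoked, so your proof trades one appeal to \cite{Ka1} for another. If you wanted a proof independent of that machinery you would need to establish tightness of $(\mu_\ell)$ in the vague topology and the fact that the Laplace functional determines the law of a random measure; as an outline that delegates exactly this to the literature, your argument is acceptable and matches the intended source.
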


\begin{proof}
This is a special case of Theorem 4.2 in \cite{Ka1}. 
\end{proof}

\subsection{Point configurations}
\label{subsec: point configs}

We let $\mathfrak{N}$ denote the class of locally finite Borel measures on $\mathbb{R}$ which are non-negative integer valued (i.e. if $\xi \in \mathfrak{N}$ then $\xi(B) \in \N_{\geq 0}$ for any bounded $B \in \mathcal{R}$). Clearly $\mathfrak{N} \subset \mathfrak{M}$. We let $\mathfrak{N}$ inherit the topology of $\mathfrak{M}$ and we label by $\mathcal{N}$ the Borel $\sigma$-algebra of $\mathfrak{N}$ in this topology.

For any $x\in \mathbb{R}$, let $\delta_x \in \mathfrak{N}$ be the point mass at $x$ defined by
$$
\delta_x(B) = \begin{cases} 1 & \textrm{if}\; x \in B \\ 0 & \textrm{if}\; x \notin B. \end{cases}
$$
It is clear that locally finite configurations of points $\{x_n\}_{n\in \Z}$ can be identified with $\xi \in \mathfrak{N}$ via $\xi = \sum \delta_{x_n}$.

Conversely we identify $\xi \in \mathfrak{N}$ with locally finite configurations of points through the following proposition. For ease of notation we define the following constraint: we say that $\xi$ is \emph{infinite in both directions} if $\xi(-\infty,0) = \xi(0,\infty) = \infty$. 

\begin{proposition}
\label{prop:meas_to_points}
For $\xi \in \mathfrak{N}$ with $\xi$ infinite in both directions, define for $n > 0$
$$
x_n(\xi):= \inf\{x > 0:\, \xi(0,x] \geq n\},
$$
and for $n \leq 0$,
$$
x_n(\xi):= \sup\{x \leq 0:\, \xi(x,0] \geq -n+1\}.
$$
Then
\begin{equation}
\label{eq:pointmass}
\xi = \sum_{n \in \Z} \delta_{x_n(\xi)},
\end{equation}
and for all $n \in \mathbb{Z}$, we have $x_n(\xi)$ is continuous in the topology $\tau$ of $\mathfrak{N}$ at all elements $\xi \in \mathfrak{N}$ except those for which $\xi(\{0\})  \geq 1$.
\end{proposition}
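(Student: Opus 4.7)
The plan splits into two parts.

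For the decomposition \eqref{eq:pointmass}, since $\xi \in \mathfrak{N}$ is locally finite and $\N_{\geq 0}$-valued, its atoms form a locally finite multiset in $\R$. The counting function $\phi^+(y) := \xi(0, y]$ on $y > 0$ is non-decreasing, right-continuous, integer-valued, and tends to $+\infty$ by the hypothesis $\xi(0, \infty) = \infty$. The formula $x_n(\xi) = \inf\{y > 0 : \phi^+(y) \geq n\}$ is then the standard left-continuous generalized inverse, returning the location of the $n$-th atom on $(0, \infty)$ counted with multiplicity (right-continuity of $\phi^+$ ensures the infimum is attained). An analogous analysis on $(-\infty, 0]$ using $\phi^-(y) := \xi(y, 0]$ handles $n \leq 0$, and summing the point masses $\delta_{x_n(\xi)}$ over $n \in \Z$ recovers $\xi$.

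For continuity, I fix $\xi \in \mathfrak{N}$ with $\xi(\{0\}) = 0$, a sequence $\xi_\ell \varrow \xi$, and some $n \in \Z$; I treat $n > 0$, the case $n \leq 0$ being symmetric. Set $x := x_n(\xi) > 0$. Given any $\epsilon > 0$, since the atoms of $\xi$ form a countable set, I may pick $\epsilon' \in (0, \min(\epsilon, x))$ with $\xi(\{x - \epsilon'\}) = \xi(\{x + \epsilon'\}) = 0$. Combined with the hypothesis $\xi(\{0\}) = 0$, the intervals $(0, x - \epsilon']$ and $(0, x + \epsilon']$ then have $\xi$-null boundary, so Proposition \ref{prop:vague}(ii) yields
\[
\xi_\ell(0, x - \epsilon'] \to \xi(0, x - \epsilon'], \qquad \xi_\ell(0, x + \epsilon'] \to \xi(0, x + \epsilon'].
\]
The definition of $x$ as an infimum gives $\xi(0, x - \epsilon'] < n$ and $\xi(0, x + \epsilon'] \geq n$, and since all quantities are integers, the corresponding inequalities for $\xi_\ell$ hold for all large $\ell$. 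By the definition of $x_n(\xi_\ell)$, this forces $x - \epsilon' < x_n(\xi_\ell) \leq x + \epsilon'$, so $|x_n(\xi_\ell) - x| < \epsilon$ eventually.

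I expect the main point to require care is conceptual rather than computational, namely the role of the hypothesis $\xi(\{0\}) = 0$. It is precisely this condition that makes $\{0\}$ a $\xi$-null set and so allows Portmanteau to be applied to intervals abutting the origin; in the $n \leq 0$ case it also forces $x_n(\xi) < 0$ strictly, so that $x + \epsilon'$ may be kept within $(-\infty, 0]$. If instead $\xi(\{0\}) \geq 1$, a vague approximant can shift the atom at the origin across it --- for example $\delta_{1/\ell} \varrow \delta_0$ --- producing a discontinuous jump in the indexing of $x_n$, which is exactly the exception acknowledged in the statement.
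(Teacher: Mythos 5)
Your proof is correct and follows essentially the same route as the paper: the decomposition via the generalized inverse of the counting function, and continuity via the Portmanteau theorem for vague convergence applied to intervals abutting $x_n(\xi)$ and the origin. The only cosmetic difference is that you invoke Proposition \ref{prop:vague}(ii) on intervals with $\xi$-null boundary (choosing $\epsilon'$ to avoid atoms), whereas the paper uses the $\limsup$/$\liminf$ inequalities of part (iii) on closed and open intervals directly; both work equally well.
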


Thus under this labeling,
$$
\cdots \leq x_{-1}(\xi) \leq x_0(\xi) \leq 0 < x_1(\xi) \leq x_2(\xi) \leq \cdots
$$ 
are the point masses of $\xi$ (listed with multiplicity). The condition that $\xi$ is infinite in both directions ensures that $x_n(\xi)$ is a well-defined real number of all $n$.

\begin{proof}
We first verify \eqref{eq:pointmass}. We must show for all $f \in C_c(\mathbb{R})$,
\begin{equation}
\label{eq:pointmass2}
\int_\R f\, d\xi = \sum_n f(x_n(\xi)).
\end{equation}
Suppose $f$ is supported in the interval $[-A,A]$ and $\xi(-A,0] = N$. Then for any $n\in \mathbb{Z}$, if $x_n(\xi) > -A$ we have,
$$
x_n(\xi) = \inf\{x > -A: \xi(-A,x] \geq N+n\}.
$$
Furthermore the left-hand side of \eqref{eq:pointmass2} is then a Riemann-Stieljes integral $\int f\, d\alpha$, where $\alpha$ is the step function
$$
\alpha(x):= \xi(-A,x] = \sum_{\substack{n \in \Z \\ x_n(\xi) > -A}} \mathbf{1}_{[x_n(\xi),\infty)}(x),
$$
and \eqref{eq:pointmass2} follows immediately.
		
It remains to verify the continuity of $x_n(\xi)$ at $\xi$ without point masses at the origin. We begin by treating the case that $n > 0$. Suppose $\xi_\ell \varrow \xi$ and $\xi(\{0\}) = 0$. Then for arbitrary $x' < x_n(\xi)$ we have $\xi[0,x'] < n$ so by the Portmanteau theorem, $\limsup_{\ell\rightarrow\infty} \xi_\ell[0,x'] < n$ and thus $x_n(\xi_\ell) > x'$ for sufficiently large $\ell$. On the other hand for arbitrary $x'' > x_n(\xi)$ we have $\xi(0,x'') \geq n$ so $\liminf_{\ell\rightarrow\infty} \xi_\ell(0,x'') \geq n$ and thus $x_n(\xi_\ell) < x''$ for sufficiently large $\ell$. Hence for sufficiently large $\ell$,
$$
x' < x_n(\xi_\ell) < x''.
$$
As $x'$ and $x''$ can be chosen arbitrarily close to $x_n(\xi)$ we have $x_n(\xi_\ell) \rightarrow x_n(\xi)$.

The case for $n \leq 0$ is treated in the same way.	
\end{proof}

\begin{remark}
Note from \eqref{eq:pointmass}, for sets $E \in \mathcal{R}$, we have $\xi(E)$ is the number of point $x_n(\xi)$ which lie in $E$.
\end{remark} 

\begin{remark} 
The functions $x_n(\xi)$ need not be continuous at those $\xi$ for which $\xi(\{0\}) \geq 1$. For imagine a point process $\xi$ evolves in time, beginning with no point mass at the origin. Suppose $\xi$ is then continuously varied by shifting its point mass corresponding to $x_1(\xi)$ to the left, while leaving all other point masses fixed. When the point mass being shifted crosses the origin of $\R$, this causes a discontinuous relabeling of points. Indeed, it may be seen that there is no way to label the points of $\xi$ which is always continuous. (Suppose that there were such a way, and label the points of the lattice $\Z$. Then continuously translate each point in this lattice to the left until $\Z$ is recovered again.)
\end{remark}

\subsection{Point processes}
\label{subsec: point processes}

A \emph{point process on} $\mathbb{R}$ is a random element of $(\mathfrak{N},\mathcal{N})$. Given point processes $\xi, \xi_1, \xi_2,...$ we say that $\xi_\ell$ tends to $\xi$ in distribution as $\ell\rightarrow\infty$ if $\xi_\ell\darrow \xi$ as random measures. We will therefore use the same notation for convergence in distribution as before. 

In addition to the characterization of distributional convergence implied by Theorem \ref{thm:distconv} we also have
\begin{theorem}
\label{thm:vectorconv}
For $\xi, \xi_1, \xi_2,...$ point processes on $\mathbb{R}$, suppose that $\xi$ and $\xi_1,\xi_2,..$ are all infinite in both directions, and that almost surely $\xi(\{0\}) = 0$. Then the following are equivalent:
\begin{enumerate}[label = (\roman*)]
\item $\xi_\ell \darrow \xi$.
\item For any positive integer $K$ we have the convergence in distribution of random vectors
$$
(x_{-K}(\xi_\ell),...,x_K(\xi_\ell)) \darrow (x_{-K}(\xi),...,x_K(\xi)).
$$
\end{enumerate}
\end{theorem}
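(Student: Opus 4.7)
The plan has two parts: for (i) $\Rightarrow$ (ii), apply the continuous mapping theorem directly using Proposition \ref{prop:meas_to_points}; for (ii) $\Rightarrow$ (i), combine a truncation identity with a tightness argument to verify criterion (ii) of Theorem \ref{thm:distconv}.

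For (i) $\Rightarrow$ (ii), by Proposition \ref{prop:meas_to_points} each map $x_n \colon \mathfrak{N} \to \R$ is continuous at every $\xi \in \mathfrak{N}$ with $\xi(\{0\}) = 0$, so the joint map $\xi \mapsto (x_{-K}(\xi),\dots,x_K(\xi))$ into $\R^{2K+1}$ is continuous off the set $\{\xi : \xi(\{0\}) \geq 1\}$. The hypothesis makes this discontinuity set null under the law of $\xi$, so the continuous mapping theorem applied to $\xi_\ell \darrow \xi$ yields (ii).

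For (ii) $\Rightarrow$ (i), by Theorem \ref{thm:distconv}(ii) it suffices to show $\int f\,d\xi_\ell \darrow \int f\,d\xi$ for each $f \in C_c(\R)$. Fix $f$ with support in $[-A,A]$. By \eqref{eq:pointmass2}, on the event $E_K(\eta) := \{x_K(\eta) > A\} \cap \{x_{-K}(\eta) < -A\}$, the identity
\[
\int f\,d\eta \;=\; F_K\bigl(x_{-K}(\eta),\dots,x_K(\eta)\bigr), \quad F_K(y_{-K},\dots,y_K) := \sum_{n=-K}^{K} f(y_n),
\]
holds for every $\eta \in \mathfrak{N}$, and $F_K$ is continuous. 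The continuous mapping theorem then gives $F_K(x_{-K}(\xi_\ell),\dots,x_K(\xi_\ell)) \darrow F_K(x_{-K}(\xi),\dots,x_K(\xi))$ for every fixed $K$. To promote this to distributional convergence of $\int f\,d\xi_\ell$ itself, one needs that both $P(E_K(\xi)^c)$ and $\limsup_{\ell} P(E_K(\xi_\ell)^c)$ tend to $0$ as $K \to \infty$. The former is immediate from almost-sure local finiteness of $\xi$ together with the assumption that $\xi$ is infinite in both directions. For the latter, (ii) supplies the one-dimensional comparison: for any continuity point $A' > A$ of the distribution of $x_K(\xi)$,
\[
\limsup_\ell P\bigl(x_K(\xi_\ell) \leq A\bigr) \;\leq\; \limsup_\ell P\bigl(x_K(\xi_\ell) \leq A'\bigr) \;=\; P\bigl(x_K(\xi) \leq A'\bigr),
\]
and the right side vanishes as $K \to \infty$; the bound on $x_{-K}(\xi_\ell)$ is symmetric. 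A standard approximation against test functions $g \in C_b(\R)$ then converts these event-wise approximations into the required convergence in distribution of $\int f\,d\xi_\ell$.

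The main technical obstacle is the tightness step in (ii) $\Rightarrow$ (i): one must parlay the marginal convergence $x_{\pm K}(\xi_\ell) \darrow x_{\pm K}(\xi)$ into uniform-in-$\ell$ control on the number of points of $\xi_\ell$ in $[-A,A]$, navigating the continuity-point caveats inherent to Portmanteau-style comparisons. The rest of the argument is a routine application of continuous mapping together with the characterization of distributional convergence furnished by Theorem \ref{thm:distconv}.
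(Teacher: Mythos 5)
Your proof is correct, and the forward direction coincides with the paper's: both apply the continuous mapping theorem to the vector map $\xi\mapsto(x_{-K}(\xi),\dots,x_K(\xi))$, whose discontinuity set $\{\xi:\xi(\{0\})\geq 1\}$ is null under the law of $\xi$ by hypothesis. For the converse you take a genuinely different route. The paper first upgrades \emph{(ii)} to convergence of the full sequence $(x_n(\xi_\ell))_{n\in\Z}\darrow(x_n(\xi))_{n\in\Z}$ in the product space $\R^\Z$, and then applies the continuous mapping theorem once more to the map $G(y)=\sum_n f(y_n)$, which is continuous at any $y$ with $|y_n|\to\infty$ --- a condition holding almost surely for $\xi$ by local finiteness. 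You instead truncate to a fixed window of $2K+1$ points and control the truncation error by a tightness argument: the comparison $\limsup_\ell \PP(x_K(\xi_\ell)\leq A)\leq \PP(x_K(\xi)\leq A')$ at a continuity point $A'>A$, together with $\PP(x_K(\xi)\leq A')=\PP(\xi(0,A']\geq K)\to 0$ as $K\to\infty$, does exactly the work that the paper delegates to the continuity of $G$ on $\R^\Z$. Your version avoids invoking the metrizable product topology on $\R^\Z$ and makes the uniform-in-$\ell$ control explicit, at the cost of the Portmanteau bookkeeping with continuity points (which you handle correctly --- note only that the continuity point $A'$ may be chosen in $(A,A+1)$ for each $K$, so the limit as $K\to\infty$ is legitimate). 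The paper's version is shorter once the product-space framework is accepted. Both arguments are complete.
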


\begin{proof}
The continuous mapping theorem for random elements (see \cite[Thm. 4.27]{Ka2}) tells us that if $S$ and $T$ are metric spaces and $G: S \rightarrow T$ is a continuous function on a set $C \subset S$, then the distributional convergence of random elements $Y_\ell$ to $Y$ implies the distributional convergence of random elements $G(Y_\ell)$ to $G(Y)$ as long as $Y$ belongs to the continuity set $C$ almost surely.

The implication that \emph{(i)} implies \emph{(ii)} is an immediate application of this. For note that if $C \subset \mathfrak{N}$ is the set of point processes $\nu$ for which $\nu(\{0\}) = 0$, then each $x_n$ (and therefore the vector in \emph{(ii)}) will be continuous on $C$, and we have supposed $\xi$ lies in this continuity set $C$ almost surely.

To see that \emph{(ii)} implies \emph{(i)}, note first that \emph{(ii)} is equivalent to 
$$
(x_n(\xi_\ell))_{n \in \Z} \darrow (x_n(\xi))_{n \in \Z},
$$
where $(x_n(\xi_\ell))_{n \in \Z}$ and $(x_n(\xi))_{n \in \Z}$ are elements of the product space $\R^\Z$ endowed with the (metrizable) product topology. (For discussion of the product topology and a proof that it is metrizable see \cite[Ex. 16, Ch. 12.2]{Ro}.) 

We rely on condition \emph{(ii)} of the generalized Portmanteau theorem for random measures and the representation \eqref{eq:pointmass} of $\xi$ as a sum of point masses, so that to see $\xi_\ell \darrow \xi$ we need only show that
\begin{equation}
\label{eq:needtoshow_linearstats}
\sum_{n \in \Z} f(x_n(\xi_\ell)) \darrow \sum_{n \in \Z} f(x_n(\xi)) 
\end{equation}
for all $f \in C_c(\R)$. For a given $f \in C_c(\R)$, the reader may verify that the map $G: \R^\Z \rightarrow \R$ defined by
$$
G(y) := \sum_{n\in \Z} f(y_n)
$$
\sloppy is continuous at $y = (y_n)_{n \in \Z}$ as long as $\lim_{|n|\rightarrow\infty}|y_n| = \infty,$ owing to the compact support of $f$. But because $\xi$ is locally finite, we have $\lim_{|n|\rightarrow\infty}|x_n(\xi)|= \infty$ always, and so an application of the continuous mapping theorem implies \eqref{eq:needtoshow_linearstats} and we are done.
\end{proof}

\subsection{Correlations and moments}
\label{subsec:corr}
We now introduce correlation measures associated with point processes. For $\xi$ a point process on $\R$, if there exists a locally finite measure $\rho_k$ on $\mathbb{R}^k$ such that
\begin{equation}
\label{eq:corrmeas_def}
\EE \sum_{n \in \Z_\ast^k} f\big( x_{n_1}(\xi),...,x_{n_{k}}(\xi)\big) = \int_{\R^k} f(x_1,...,x_k)\, d\rho_k(x_1,...,x_k),
\end{equation}
for all $f \in C_c(\R^k)$, then $\rho_k$ is referred to as the \emph{$k$-th correlation measure} (or \emph{$k$-th joint intensity}) of $\xi$. Such a measure will exist as long as $\xi(I) \in \LL^k$ for all compact intervals $I$ (see \cite[Prop. 3.2]{L2} or \cite[Thm. A.1]{LaRo}).

If $\xi, \xi_1, \xi_2,...$ are point processes on $\R$ each with $k$-th correlation measures defined for all $k$, we say that $\xi_\ell$ \emph{tends in correlation} to $\xi$ and write $\xi_\ell \corrarrow \xi$ if for all $k$ and all $f \in C_c(\R^k)$ we have the convergence of real numbers
\begin{equation}
\label{eq:corr_general}
\EE \sum_{n \in \Z_\ast^k} f(x_{n_1}(\xi_\ell),...,x_{n_k}(\xi_\ell)) \rightarrow \EE \sum_{n \in \Z_\ast^k} f(x_{n_1}(\xi),...,x_{n_k}(\xi)),
\end{equation}
as $\ell\rightarrow\infty$. (That is, if $\xi_\ell$ has $k$-th correlation measure $\rho_k^\ell$ and $\xi$ has $k$-th correlation measure $\rho_k$, we say $\xi_\ell \corrarrow \xi$ if on $\R^k$ there is the vague convergence of measures $\rho_k^\ell \varrow \rho_k$ for all $k$.)

We also use the following terminology: for $X, X_1, X_2, \ldots $ random variables, we say $X_\ell$ tends to $X$ in moments if $X, X_\ell \in \LL^k$ for all $k\geq 1$, $\ell$ and $\EE\, X_\ell^k \rightarrow \EE\, X^k$ for all $k$. We write $X_\ell \marrow X$ in this case. (As usual, we use the notation $\LL^k$ to denote the space of random variables with finite $k$-th absolute moment.)

\begin{proposition}
\label{prop:correlations_moments}
For $\xi, \xi_1, \xi_2,...$ point processes on $\R$, suppose that $\xi(I)$, $\xi_\ell(I) \in \LL^k$ for all $k\geq 1, \ell$ and all compact intervals $I$. Then the following statements are equivalent:
\begin{enumerate}[label = (\roman*)]
\item We have the convergence in correlation of point processes $\xi_\ell \corrarrow \xi$.
\item For all $f \in C_c(\R)$, we have the convergence in moments of random variables,
$$
\int f \, d\xi_\ell \marrow \int f \, d\xi.
$$
\end{enumerate}
\end{proposition}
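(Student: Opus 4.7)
The plan is to establish an explicit algebraic relation between the $k$-th moment $\EE(\int f\, d\xi)^k$ and the correlation integrals $\int f^{\otimes r}\, d\rho_r$ for $r\leq k$ via a decomposition over set partitions, and then invert this relation to obtain convergence in the opposite direction.

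For (i)$\Rightarrow$(ii), expanding
\begin{equation*}
\EE\Bigl(\int f\, d\xi\Bigr)^k = \EE\sum_{(n_1,\ldots,n_k)\in\Z^k} f(x_{n_1}(\xi))\cdots f(x_{n_k}(\xi))
\end{equation*}
and grouping tuples by the partition $\pi$ of $\{1,\ldots,k\}$ generated by $i\sim j\iff n_i=n_j$, the contribution from a partition with blocks $B_1,\ldots,B_r$ equals, by \eqref{eq:corrmeas_def},
\begin{equation*}
\int_{\R^r}\prod_{j=1}^r f(y_j)^{|B_j|}\, d\rho_r(y_1,\ldots,y_r).
\end{equation*}
Thus $\EE(\int f\, d\xi)^k$ is a finite sum of correlation integrals against $C_c(\R^r)$ tensor-power integrands, and each term converges under $\xi_\ell\corrarrow\xi$.

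For (ii)$\Rightarrow$(i) I would first upgrade to mixed moments by polarization: substituting $f=\sum_{s=1}^m t_s g_s$ into the hypothesis and reading off the coefficient of $t_1^{a_1}\cdots t_m^{a_m}$ in the resulting polynomial identity (valid because the polynomials in $\mathbf{t}$ have bounded degree $k$) shows that $\EE\prod_s(\int g_s\, d\xi_\ell)^{a_s}$ converges for every choice of $g_s\in C_c(\R)$ and $a_s\in\N$ with $\sum_s a_s=k$. In particular, taking $m=k$ and all $a_s=1$, the mixed moment $\EE\prod_{s=1}^k\int g_s\, d\xi_\ell$ converges. Applying the partition expansion to this mixed moment gives a triangular system whose finest-partition term is exactly $\int g_1(y_1)\cdots g_k(y_k)\, d\rho_k^\ell(y)$, with the remaining terms being tensor-product integrals against lower-order correlation measures $\rho_r^\ell$, $r<k$. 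Induction on $k$ then yields $\int(g_1\otimes\cdots\otimes g_k)\, d\rho_k^\ell\to\int(g_1\otimes\cdots\otimes g_k)\, d\rho_k$ for all $g_i\in C_c(\R)$.

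The main obstacle is to upgrade this convergence against tensor products to vague convergence against arbitrary $f\in C_c(\R^k)$. For this I would fix compact sets $K_1,\ldots,K_k\subset\R$ large enough that $\supp f\subset K_1\times\cdots\times K_k$, use Stone--Weierstrass to approximate $f$ uniformly on $K_1\times\cdots\times K_k$ by linear combinations of tensor products $g_1(y_1)\cdots g_k(y_k)$ with $g_i\in C_c(\R)$, and control the approximation error using the bound $\rho_k^\ell(K_1\times\cdots\times K_k)\leq\EE\prod_i\xi_\ell(K_i)$; choosing $h_i\in C_c(\R)$ with $h_i\geq\mathbf{1}_{K_i}$, one has $\EE\prod_i\xi_\ell(K_i)\leq\EE\prod_i\int h_i\, d\xi_\ell$, which converges (hence is uniformly bounded in $\ell$) by the polarization step. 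A standard three-epsilon argument then completes the proof that $\xi_\ell\corrarrow\xi$.
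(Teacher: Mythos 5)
Your proposal is correct and follows essentially the same route as the paper: reduce moments to correlation sums against tensor products (you do this in closed form via a partition expansion where the paper uses an equivalent inductive recursion), recover mixed moments from pure moments by polarization, and pass from tensor products to general $f\in C_c(\R^k)$ by density. Your treatment of the last step is in fact slightly more careful than the paper's, which invokes ``a standard approximation argument'' without spelling out the uniform-in-$\ell$ bound on $\rho_k^\ell$ over compact boxes that you correctly identify as the needed ingredient.
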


\begin{proof}
We proceed through a series of equivalences.

\emph{Step 1:} First we note that $\xi_\ell \corrarrow \xi$ if and only if
\begin{equation}
\label{eq:corr_tensor}
\EE \sum_{n \in \Z_\ast^k} f_1(x_{n_1}(\xi_\ell))\cdots f_k(x_{n_k}(\xi_\ell)) \rightarrow \EE \sum_{n \in \Z_\ast^k} f_1(x_{n_1}(\xi))\cdots f_k(x_{n_k}(\xi)),
\end{equation}
for all $k\geq 1$ and all $f_1,...,f_k \in C_c(\R)$. Clearly $\xi_\ell \corrarrow \xi$ implies \eqref{eq:corr_tensor}, by considering correlations with a test function $f(x_1,...,x_k) = f_1(x_1)\cdots f_k(x_k)$. On the other hand such test functions are dense in $C_c(\R^k)$, so by a standard approximation argument \eqref{eq:corr_tensor} entails \eqref{eq:corr_general} for arbitrary $f \in C_c(\R^k)$.

\emph{Step 2:} Next we note that \eqref{eq:corr_tensor} holding for all $k\geq 1$ and $f_1,...,f_k \in C_c(\R)$ is equivalent to convergence of mixed moments
\begin{equation}
\label{eq:mixed_moments}
\EE \prod_{i=1}^k \sum_{n\in \Z} f_i(x_n(\xi_\ell)) \rightarrow \EE \prod_{i=1}^k \sum_{n\in \Z} f_i(x_n(\xi)),
\end{equation}
for all $k\geq 1$ and $f_1,...,f_k \in C_c(\R)$. 

We can see this in the following way. Given a locally finite configuration of points $\{x_n\}$, let
$$
M_k(f_1,...,f_k) = \prod_{i=1}^k \sum_{n\in \Z} f_i(x_n), \quad C_k(f_1,...,f_k) = \sum_{n \in \Z_\ast^k} f_1(x_{n_1})\cdots f_k(x_{n_k}),
$$
be mixed products of linear statistics and $k$-point correlation sums respectively. Then we will show
$$
M_k(f_1,...,f_k) - C_k(f_1,...,f_k)
$$
is a linear combination of sums of the sort $C_j(g_1,...,g_j)$ for $j\leq k-1$ and $g_i \in C_c(\R)$ for all $i$. Because $M_1(f) = C_1(f)$, by taking expectations and using induction this will show that the claim that \eqref{eq:corr_tensor} holds for all $k\geq 1$ is equivalent to the claim that \eqref{eq:mixed_moments} holds for all $k\geq 1$.

It thus remains in this step to show $M_k(f_1,...,f_k)-C_k(f_1,...,f_k)$ is a linear combination of $j$-point correlation sums for $j \leq k-1$. We prove this by induction in $k$, with $k=1$ following from $M_1(f) = C_1(f)$. Note that
\begin{multline}
\label{eq:corr_induction}
\Big(\sum_{n \in \Z} f(x_n)\Big) C_j(g_1,...,g_j) \\
= C_{j+1}(g_1,...,g_j,f) + C_j(g_1 f, g_2,...,g_j)+ \cdots + C_j(g_1,...,g_j f).
\end{multline}
Therefore if
$$
M_{k-1}(f_1,...,f_{k-1}) = C_{k-1}(f_1,...,f_{k-1}) + \mathcal{E}
$$
where $\mathcal{E}$ is a linear combination of $j$-point correlation sums with $j \leq k-2$, then
\begin{multline*}
M_k(f_1,...,f_k) = \Big(\sum_{n\in \Z} f_k(x_n)\Big) M_{k-1}(f_1,...,f_{k-1}) \\
= C_k(f_1,...,f_k) + C_{k-1}(f_1 f_k,..., f_{k-1}) + \\
\cdots + C_{k-1}(f_1,...,f_{k-1} f) + \Big(\sum_{n\in \Z} f_k(x_n)\Big) \mathcal{E},
\end{multline*}
and by \eqref{eq:corr_induction}, the last term on the right will be a linear combination of $j$-point correlation sums with $j\leq k-1$. Thus inductively our claim is verified.

\emph{Step 3:} Finally we show for any $k\geq 1$ we have convergence of mixed moments \eqref{eq:mixed_moments} for all $f_1,...,f_k \in C_c(\R)$ if and only if we have convergence of moments
\begin{equation}
\label{eq:unmixed_moment}
\EE \Big( \int f d\xi_\ell \Big)^k \rightarrow \EE \Big( \int f d\xi \Big)^k,
\end{equation}
for all $f \in C_c(\R)$. That \eqref{eq:mixed_moments} implies \eqref{eq:unmixed_moment} is clear by setting $f = f_1 = \cdots = f_k$. On the other hand, one may go from \eqref{eq:unmixed_moment} to \eqref{eq:mixed_moments} by using a multilinear polarization identity:
$$
S_1 \cdots S_k = \frac{1}{k!} \sum_{\varepsilon \in \{0,1\}^k} (-1)^{k-\varepsilon_1 - \cdots -\varepsilon_k} (\varepsilon_1 S_1+ \cdots + \varepsilon_k S_k)^k,
$$
setting $S_i = \int f_i\, d\xi_\ell$. (More information about multilinear polarization can be found in e.g. the survey \cite{Th}.)

Combining these steps we thus see the equivalence of (i) and (ii).
\end{proof}

From the method of moments we thus have

\begin{theorem}
\label{thm:tail_bound_correlations}
Suppose $\xi$ is a point process on $\R$ and for any compact interval $I \subset \R$ we have 
\begin{equation}
\label{eq:limiting_tail_bound}
\PP(\xi (I) \geq t) \leq C e^{-ct}, \quad \textrm{for all}\; t \geq 0,
\end{equation}
for positive constants $C,c$ which depend on $I$. 

Suppose $\xi_1, \xi_2,...$ are point processes on $\R$ satisfying $\xi_\ell(I) \in \LL^k$ for all compact intervals $I$ and all $\ell, k \geq 1$. If $\xi_\ell \corrarrow \xi$ then we also have $\xi_\ell \darrow \xi$.
\end{theorem}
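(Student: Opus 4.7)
The plan is to apply the method of moments, chaining the equivalences already established in the excerpt. By Proposition \ref{prop:correlations_moments} the hypothesis $\xi_\ell \corrarrow \xi$ is equivalent to
$$
\int f\, d\xi_\ell \marrow \int f\, d\xi \quad \text{for every } f \in C_c(\R),
$$
that is, convergence of all moments of the real-valued random variables $\int f\, d\xi_\ell$. On the other hand, by the generalized Portmanteau theorem (Theorem \ref{thm:distconv}), to prove $\xi_\ell \darrow \xi$ it suffices to establish $\int f\, d\xi_\ell \darrow \int f\, d\xi$ in distribution for every such $f$. Thus the whole theorem reduces to the classical problem: when does convergence of moments of real random variables entail convergence in distribution?

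The standard sufficient condition is that the limit random variable $\int f\, d\xi$ be uniquely determined by its moments. This is precisely what the exponential tail bound \eqref{eq:limiting_tail_bound} is meant to supply. Indeed, for $f \in C_c(\R)$ supported in a compact interval $I$ we have the pointwise bound $|\int f\, d\xi| \leq \|f\|_\infty \xi(I)$, and so
$$
\PP\Bigl(\Bigl|\int f\, d\xi\Bigr| \geq t\Bigr) \leq \PP\bigl(\xi(I) \geq t/\|f\|_\infty\bigr) \leq C e^{-c' t}
$$
for some $c' > 0$. Consequently the random variable $\int f\, d\xi$ has a moment generating function finite in a neighbourhood of $0$, which by a classical criterion (e.g.\ a quick Carleman estimate, or the standard result that existence of an exponential moment implies moment determinacy) shows that its distribution is uniquely determined by its sequence of moments.

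With moment determinacy of the limit in hand, the usual method-of-moments theorem (if $X_\ell \marrow X$ and the law of $X$ is determined by its moments then $X_\ell \darrow X$; see e.g.\ any standard probability reference) yields $\int f\, d\xi_\ell \darrow \int f\, d\xi$ for every $f \in C_c(\R)$. Combined with Theorem \ref{thm:distconv}(ii), this gives $\xi_\ell \darrow \xi$, which is the conclusion of the theorem.

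I do not anticipate any serious obstacle here: the entire argument is essentially an assembly of results already recorded in the excerpt together with a textbook moment-determinacy step. The only point worth care is to state the implication $\text{tail bound} \Rightarrow \text{moment determinacy}$ at the level of the random variables $\int f\, d\xi$ rather than at the level of the counts $\xi(I)$ themselves, which is immediate from the pointwise bound $|\int f\, d\xi| \leq \|f\|_\infty \xi(\supp f)$.
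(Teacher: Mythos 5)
Your proposal is correct and follows essentially the same route as the paper: reduce to convergence of moments of $\int f\, d\xi_\ell$ via Proposition \ref{prop:correlations_moments}, use the tail bound \eqref{eq:limiting_tail_bound} to deduce moment determinacy of $\int f\, d\xi$, apply the method of moments, and conclude with Theorem \ref{thm:distconv}(ii). The only cosmetic difference is that the paper certifies moment determinacy via the factorial moment bound $\EE\,|\int f\, d\xi|^k \leq k!\,A^k$ rather than via finiteness of the moment generating function near $0$; these are equivalent standard criteria.
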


\begin{proof}
From Proposition \ref{prop:correlations_moments}, the claim $\xi_\ell \corrarrow \xi$ implies $\int f\, d\xi_\ell \marrow \int f\, d\xi$ for all $f \in C_c(\R)$. But \eqref{eq:limiting_tail_bound} implies 
$$
\PP\Big(\Big|\int f\, d\xi\Big| \geq t\Big) \leq C e^{-ct},
$$ for positive constants $C$ and $c$ (which depend on $f$). But then $\EE |\int f\, d\xi|^k \leq k! A^k$ for some constant $A$ (depending on $C$ and $c$), and so the random variable $\int f\, d\xi$ is determined by its moments (see \cite[Sec. 30]{B}). Thus $\int f\, d\xi_\ell \marrow \int f\, d\xi$ implies $\int f\, d\xi_\ell \darrow \int f\, d\xi$ by \cite[Thm. 30.2]{B}.
\end{proof}

\begin{remark}
Clearly from the proof the condition \eqref{eq:limiting_tail_bound} can be relaxed to the random variables $\xi(I)$ always being determined by their moments.
\end{remark}

In the opposite direction,

\begin{theorem}
\label{thm:tail_bound_distributions}
Suppose $\xi_1, \xi_2,...$ are point processes on $\R$ and for any fixed compact interval $I \subset \R$ and any fixed $k \geq 0$ we have
\begin{equation}
\label{eq:sup_moment_bound}
\sup_\ell \EE\, |\xi_\ell(I)|^k \leq C_{I,k}
\end{equation}
for a constant $C$ which depends on $I$ and $k$.

If $\xi$ is a point process on $\R$ and $\xi_\ell \darrow \xi$, then we have $\xi_\ell \corrarrow \xi$.
\end{theorem}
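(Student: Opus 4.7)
The plan is to reduce the conclusion to Proposition \ref{prop:correlations_moments}, so that it suffices to establish the moment convergence $\int f\, d\xi_\ell \marrow \int f\, d\xi$ for every $f \in C_c(\R)$; that proposition then supplies the vague convergence of correlation measures, which is exactly the definition of $\xi_\ell \corrarrow \xi$.

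First, Theorem \ref{thm:distconv} applied to the hypothesis $\xi_\ell \darrow \xi$ gives immediately the distributional convergence $\int f\, d\xi_\ell \darrow \int f\, d\xi$ as real-valued random variables, for every $f \in C_c(\R)$.

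Next I upgrade this to convergence of moments using the hypothesis \eqref{eq:sup_moment_bound}. Fix $f \in C_c(\R)$ with $\supp f \subset I$ for some compact interval $I$ and $\|f\|_\infty \leq M$. Then $|\int f\, d\xi_\ell| \leq M\xi_\ell(I)$, so
\[ \sup_\ell \EE \Big|\int f\, d\xi_\ell\Big|^{k+1} \leq M^{k+1} C_{I,k+1} < \infty \]
for every $k \geq 0$. The uniform $L^{k+1}$-bound makes the family $\big(\big(\int f\, d\xi_\ell\big)^k\big)_\ell$ uniformly integrable, so distributional convergence of $\int f\, d\xi_\ell$ lifts to convergence of the $k$-th moment: $\EE \big(\int f\, d\xi_\ell\big)^k \to \EE\big(\int f\, d\xi\big)^k$ for every $k$, with the limit moment finite. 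Hence $\int f\, d\xi_\ell \marrow \int f\, d\xi$.

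To legitimately invoke Proposition \ref{prop:correlations_moments} I still need $\xi(I) \in \LL^k$ for every compact $I$ and every $k$. But given such an $I$ I may pick $f \in C_c(\R)$ with $f \geq \ind_I$, whence $\xi(I) \leq \int f\, d\xi$, and the right-hand side has finite moments of all orders by the previous step. With this integrability in hand, Proposition \ref{prop:correlations_moments} converts the moment convergence just obtained into $\xi_\ell \corrarrow \xi$. The only substantive point is the uniform-integrability argument that turns the moment hypothesis \eqref{eq:sup_moment_bound} into the extra control needed to lift distributional convergence of linear statistics to convergence of their moments; the rest is bookkeeping within the framework already built.
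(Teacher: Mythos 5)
Your proposal is correct and follows essentially the same route as the paper: apply Theorem \ref{thm:distconv} to get distributional convergence of linear statistics, use the bound $|\int f\,d\xi_\ell|\le \|f\|_\infty\,\xi_\ell(\supp f)$ together with \eqref{eq:sup_moment_bound} to get uniform higher-moment bounds, lift to convergence of moments by uniform integrability (the paper cites the Corollary of Theorem 25.12 in Billingsley for exactly this step), and finish with Proposition \ref{prop:correlations_moments}. Your explicit verification that $\xi(I)\in\LL^k$ by majorizing $\ind_I$ with a function in $C_c(\R)$ is a point the paper leaves implicit, and is a welcome addition.
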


\begin{proof}
By Theorem \ref{thm:distconv} $\xi_\ell \darrow \xi$ implies for any $f\in C_c(\R)$ that $\int f \, d\xi_\ell \darrow \int f\, d\xi$. \eqref{eq:sup_moment_bound} implies that for any $k$ one has 
$$
\sup_\ell\; \EE \,\Bigl|\int f\, d\xi_\ell\Bigr|^{k+\epsilon} < \infty
$$
for some $\epsilon > 0$, so by Corollary of Theorem 25.12 in \cite{B}, $\int f\, d\xi \in \LL^k$ for all $k$ and $\int f \, d\xi_\ell \marrow \int f\, d\xi$. Hence the $k$-th correlation measures of $\xi$ are defined for all $k$ and $\xi_\ell \corrarrow \xi$ by Proposition \ref{prop:correlations_moments}.
\end{proof}

\begin{remark}
Conditions similar to \eqref{eq:limiting_tail_bound} and \eqref{eq:sup_moment_bound} are necessary for Theorems \ref{thm:tail_bound_correlations} and \ref{thm:tail_bound_distributions} to be true. As remarked upon the condition \eqref{eq:limiting_tail_bound} can be weakened slightly, but clearly \eqref{eq:sup_moment_bound} must be true for the limits of correlations of $\xi_\ell$ to even exist. 

One may construct point processes which converge in correlation but do not converge in distribution or vice-versa. A simple construction is as follows: take integer-valued random variables $X_\ell$ which converge in moments but not in distribution or vice-versa (see \cite[Sec. 30]{B} for further discussion). Now let $\xi_\ell$ be the point process with a quantity $X_\ell$ points at the origin, and no other points. Then $\xi_\ell$ will converge in correlation but not distribution, or vice-versa. (One can modify this construction if point processes infinite in both direction are desired; e.g. place $X_\ell$ points at the origin and one point deterministically at each integer other than the origin.)
\end{remark}

\section{Respacing correlations}
\label{sec: respacing}

In this section we show that different methods of rescaling a deterministic sequence of points do not affect limiting correlation functions, at least as long as the rescaling is sufficiently regular. The proof depends on Tauberian ideas.

Recall that a function $L(x)$ is said to be \emph{slowly varying} \cite[Sec. IV.2]{Ko} if it is measurable, eventually positive, and for any fixed $r > 0$ satisfies
\begin{equation}
\label{eq:slowvar}
\frac{L(rx)}{L(x)} \rightarrow 1, \quad \textrm{as } x\rightarrow\infty.
\end{equation}
It is known \cite[Thm. 2.2]{Ko} that if $L$ is slowly varying, then for any fixed $0 < b < B < \infty$, the limit in \eqref{eq:slowvar} holds uniformly for $r \in [b,B]$.

We consider a sequence of positive real numbers $\{c_n\}_{n \geq 1}$ listed in non-decreasing order (repitition is allowed) with slowly varying density: if
$$
N_c(T) := |\{n:\; c_n \in (0,T]\}|
$$
then we will limit our attention to the case
\begin{equation}
\label{eq:N_count}
N_c(T) \sim T \cdot L(T), \quad \textrm{as } T\rightarrow\infty
\end{equation}
for a slowly varying function $L(T)$.

Our first lemma verifies that different ranges of averaging for correlation functions are equivalent.

\begin{lemma}
\label{lem: avg_equiv}
Let $\{c_n\}_{n\geq 1}$ be a non-decreasing sequence of positive real numbers satisfying \eqref{eq:N_count}. For any $k\geq 1$, let $\mu_k$ be a measure on $\R^k$. Then the following are equivalent:
\begin{enumerate}[label = (\roman*)]
\item For all $\eta \in C_c(\R^k)$,
$$
\lim_{T\rightarrow\infty} \frac{1}{T} \int_0^T \sum_{n \in \Z_\ast^k} \eta( L(t)(c_{n_1}-t), \, \cdots , L(t)(c_{n_k}-t))\, dt = \int_{\R^k} \eta(x) \, d\mu_k(x).
$$
\item For all $\eta \in C_c(\R^k)$,
$$
\lim_{T\rightarrow\infty} \frac{1}{T} \int_T^{2T} \sum_{n \in \Z_\ast^k} \eta( L(T)(c_{n_1}-t), \, \cdots , L(T)(c_{n_k}-t))\, dt = \int_{\R^k} \eta(x) \,d\mu_k(x).
$$
\end{enumerate}
\end{lemma}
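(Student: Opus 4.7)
The plan is to deduce both directions of the equivalence from a single comparison estimate plus elementary manipulations. Writing
\begin{align*}
F_\eta(t) &:= \sum_{n \in \Z_\ast^k} \eta\bigl(L(t)(c_{n_1}-t), \ldots, L(t)(c_{n_k}-t)\bigr), \\
G_\eta(t,S) &:= \sum_{n \in \Z_\ast^k} \eta\bigl(L(S)(c_{n_1}-t), \ldots, L(S)(c_{n_k}-t)\bigr),
\end{align*}
condition (i) asserts $\tfrac{1}{T}\int_0^T F_\eta(t)\,dt \to \int \eta\,d\mu_k$ and condition (ii) asserts $\tfrac{1}{T}\int_T^{2T} G_\eta(t,T)\,dt \to \int \eta\,d\mu_k$. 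The comparison estimate I will establish is
\[ \frac{1}{S}\int_S^{2S} \bigl|F_\eta(t) - G_\eta(t,S)\bigr|\,dt \longrightarrow 0 \quad (S\to\infty), \]
valid for every $\eta \in C_c(\R^k)$.

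To prove this estimate, suppose $\eta$ is supported in $[-R,R]^k$. Uniform slow variation gives $\epsilon_S := \sup_{t\in[S,2S]} |L(t)/L(S)-1| \to 0$, and any tuple $(n_1,\ldots,n_k)$ contributing to either sum must have $|c_{n_i}-t| \leq 2R/L(S)$ once $S$ is large, so $|L(t)(c_{n_i}-t) - L(S)(c_{n_i}-t)| \leq 2R\epsilon_S$. Uniform continuity of $\eta$ then yields the pointwise bound $|F_\eta(t) - G_\eta(t,S)| \leq \omega_\eta(2R\epsilon_S) \cdot M(t,S)^k$, where $M(t,S) := |\{n : c_n \in [t-2R/L(S), t+2R/L(S)]\}|$. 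Expanding $M(t,S)^k$ via Stirling numbers of the second kind into falling factorials $M(t,S)^{\underline j}$ (each of which equals a $j$-point correlation sum with indicator test function) and dominating the indicator $\mathbf{1}_{[-2R,2R]}$ pointwise by a fixed bump $\chi \in C_c(\R)$ shows $M(t,S)^k \leq \sum_{j=1}^k c_{k,j}\, G^{(j)}_{\chi^{\otimes j}}(t,S)$. The average of each such correlation sum over $[S,2S]$ is $O(1)$ by whichever of (i) or (ii) is being assumed (in the (i)$\Rightarrow$(ii) direction one rewrites using $L(t)$-scaled windows and a slightly enlarged $\chi$, using slow variation to absorb the factor $L(t)/L(S)$). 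Hence the averaged difference is $O(\omega_\eta(2R\epsilon_S)) \to 0$.

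For (i)$\Rightarrow$(ii), the identity
\[ \frac{1}{T}\int_T^{2T} F_\eta(t)\,dt = 2\cdot \frac{1}{2T}\int_0^{2T} F_\eta(t)\,dt - \frac{1}{T}\int_0^T F_\eta(t)\,dt \]
combined with (i) shows $\tfrac{1}{T}\int_T^{2T} F_\eta\,dt \to \int\eta\,d\mu_k$, and the comparison estimate at $S=T$ upgrades this to (ii). For (ii)$\Rightarrow$(i), I use a dyadic decomposition: given $\epsilon>0$, by (ii) and the comparison estimate fix $S_0$ with $\bigl|\tfrac{1}{S}\int_S^{2S} F_\eta\,dt - \int\eta\,d\mu_k\bigr| < \epsilon$ for all $S \geq S_0$. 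For $T$ large, pick $J$ with $T/2^J \in [S_0, 2S_0]$, set $T_j := T/2^{j+1}$, and split
\[ \frac{1}{T}\int_0^T F_\eta(t)\,dt = \frac{1}{T}\int_0^{T/2^J} F_\eta(t)\,dt + \sum_{j=0}^{J-1} \frac{1}{2^{j+1}} \cdot \frac{1}{T_j}\int_{T_j}^{2T_j} F_\eta(t)\,dt. \]
Every $T_j \geq S_0$, so each summand equals $\tfrac{1}{2^{j+1}}\bigl(\int\eta\,d\mu_k + O(\epsilon)\bigr)$; since $\sum_{j\geq 0} 2^{-j-1} = 1$ and the tail, integrated over a bounded region of length at most $2S_0$, contributes only $O(1/T)$, the left side equals $\int\eta\,d\mu_k + O(\epsilon) + o(1)$. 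Letting $T\to\infty$ and then $\epsilon\to 0$ gives (i).

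The hard step is the comparison estimate: $M(t,S)^k$ is not itself a correlation sum, but the Stirling expansion into falling factorials converts it to a positive combination of correlation sums at orders $j\leq k$ that the hypothesis can control. A minor annoyance in the (i)$\Rightarrow$(ii) direction is that these bounds on counts with $L(S)$-scaled windows must be transferred to $L(t)$-scaled correlation sums, which costs a small enlargement of the bump $\chi$.
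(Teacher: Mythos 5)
Your argument is correct and follows essentially the same route as the paper's: a slow-variation comparison between the $L(t)$- and $L(T)$-scaled sums on $[T,2T]$, with the error controlled by an averaged correlation sum against a majorant of an indicator, followed by differencing for (i)$\Rightarrow$(ii) and dyadic summation for (ii)$\Rightarrow$(i). One small caveat: your Stirling-number expansion of $M(t,S)^k$ invokes $j$-point correlation averages for every $j\le k$, which the hypothesis (read for a single fixed $k$) does not directly supply; the paper sidesteps this by majorizing the $k$-dimensional indicator $\ind_K$ by a single function in $C_c(\R^k)$, and you could equally stay at order $k$ via the elementary bound $M^k \le 2^k\, M(M-1)\cdots(M-k+1) + (2k)^k$.
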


\begin{remark}
Note that in \emph{(i)}, in addition to an average from $[0,T]$, the dilation factor $L(t)$ varies throughout the integral. In \emph{(ii)} the dilation factor $L(T)$ remains constant.
\end{remark}

\begin{proof}
The proof depends upon the fact that $L(t)$ is slowly varying and so for $t \in [T,2T]$, changing $L(t)$ to $L(T)$ will have negligible affect on correlation sums.

Let us first show that \emph{(ii)} is equivalent to the claim that for all $\eta \in C_c(\R^k)$,
\begin{multline}
\label{eq:avg_varying}
\int_T^{2T} \sum_{n \in \Z_\ast^k} \eta( L(t)(c_{n_1}-t), \, \cdots , L(t)(c_{n_k}-t))\, dt \\= T \int_{\R^k} \eta(x) \, d\mu_k(x) + o_{T\rightarrow\infty}(T).
\end{multline}
Indeed, suppose \emph{(ii)} is true. For all $t \in [T,2T]$, we have $L(t) = L(T) + o_{T\rightarrow\infty}(L(T))$. Moreover in the sum which appears in \emph{(ii)}, we may restrict to $n$ such that $|c_{n_i}-t| \leq A/L(T)$ for all $i$, supposing $\eta$ is supported in a box $K = [-A,A]^k$ for some constant $A$. For such $n$ then $L(t)(c_{n_i}-t) = L(T)(c_{n_i}-t) + o_{T\rightarrow\infty}(1)$, and so by continuity of $\eta$,
\begin{multline*}
\eta( L(T)(c_{n_1}-t), \, \cdots , L(T)(c_{n_k}-t)) \\ = \eta( L(t)(c_{n_1}-t), \, \cdots , L(t)(c_{n_k}-t)) + o_{T\rightarrow\infty}(1).
\end{multline*}
Thus
\begin{multline*}
\sum_{n \in \Z_\ast^k} \eta( L(T)(c_{n_1}-t), \, \cdots , L(T)(c_{n_k}-t)) \\
= \sum_{n \in \Z_\ast^k} \eta( L(t)(c_{n_1}-t), \, \cdots , L(t)(c_{n_k}-t)) \\
+ o_{T\rightarrow\infty}\Big(\sum_{n \in \Z_\ast^k} \ind_K( L(T)(c_{n_1}-t), \, \cdots , L(T)(c_{n_k}-t))\Big).
\end{multline*}
If we integrate $t \in [T,2T]$, we have that \emph{(ii)} implies left hand side will be equal to $T \int \eta d\mu_k + o(T)$. But likewise, by majorizing the function $\ind_K$ by a continuous and compactly supported non-negative function on $\R^k$, \emph{(ii)} implies the integral of the error term on the right hand side will be $o(T)$. This implies \eqref{eq:avg_varying}. In the opposite direction, one may pass from \eqref{eq:avg_varying} for all $\eta$ to \emph{(ii)} in the same way.

Thus we need only show that \eqref{eq:avg_varying} is equivalent to \emph{(i)}. But this is just an equivalence between full and dyadic averaging; indeed if \emph{(i)} is true, one may replace $T$ with $2T$ and then from the resulting integral on $[0,2T]$ subtract the original integral on $[0,T]$. This implies \eqref{eq:avg_varying}. In the opposite direction, if \eqref{eq:avg_varying} is true, then for a given $\eta$ the left hand side of \eqref{eq:avg_varying} must uniformly be $O(T)$ for all $T$. (It will be this for sufficiently large $T$ by the limiting relation, and then for $T$ smaller than this value, the bound of $O(T)$ follows by compactness.) Hence,
\begin{multline}
\label{eq:full_avg}
\int_0^T \sum_{n \in \Z_\ast^k} \eta( L(t)(c_{n_1}-t), \, \cdots , L(t)(c_{n_k}-t))\, dt \\
= \sum_{v = 0}^\infty \int_{T/2^{v+1}}^{T/2^v} \sum_{n \in \Z_\ast^k} \eta( L(t)(c_{n_1}-t), \, \cdots , L(t)(c_{n_k}-t))\, dt.
\end{multline}
Let $V$ be arbitrarily large but fixed and consider the sums $v < V$ and $v \geq V$ seperately. In the former case we use the limiting form of \eqref{eq:avg_varying} and in the latter case we use an upper bound. The result is that for arbitrary $V$ the above is
\begin{multline*}
(1-2^{-V}) T \int \eta \,d\mu_k + O(2^{-V} T) + o_{T\rightarrow\infty}(T) \\= T \int \eta \,d\mu_k + O(2^{-V} T) + o_{T\rightarrow\infty}(T)
\end{multline*}
As $V$ is arbitrary this implies the sought after estimate that \eqref{eq:full_avg} is equal to $T \int \eta \, d\mu_k + o_{T\rightarrow\infty}(T)$.
\end{proof}

\begin{remark}
We note that in claim \emph{(ii)} of this Lemma, the average over the interval $[T,2T]$ could be replaced by any $[\alpha T, \beta T]$ for $0 < \alpha < \beta$ and the Lemma would remain true. Under certain conditions the average can be set to $[0,T]$; for instance uniform upper bounds $\int_0^T |N_c(t + 1/L(T))-N_c(t)|^k\, dt \leq C_k T$ for absolute constants $C_k$ and all sufficiently large $T$ will have this implication. (See Corollary \ref{cor:Fujii_cor1} for such an estimate in the context of zeta zeros.) In any case, we do not require replacing the integral from $[T,2T]$ with an integral from $[0,T]$ in what follows and leave the proof of this remark to the reader.
\end{remark}

We now rescale the points $c_n$ themselves to have mean unit density. This is accomplished by choosing a rescaling function $\phi$ satisfying
\begin{equation}
\label{eq:phi_matching}
\phi(t) \sim t \, L(t).
\end{equation}
The reader may check that then $\phi(c_n) \sim n$, and $|\{n:\; \phi(c_n) \in [0,T]\}| \sim T$. 

We will suppose $\phi$ satisfies the following mild regularity conditions: $\phi(t)$ is differentiable and if $\Phi(t):= \phi(t)/t$ then
\begin{equation}
\label{eq:small_deriv}
\Phi'(t)/\Phi(t) = o(1/t).
\end{equation}
In fact for any slowly varying function $L$, functions $\phi$ satisfying these conditions will exist. This can be verified using Karamata's characterization of slowly varying functions \cite[Thm. 2.2 (iii)]{Ko}; we leave the details to interested readers. In any case we will see that $\phi(t) = \vartheta(t)/\pi$ or $\phi(t) = t \tfrac{\log(t)}{2\pi}$ are such functions for the ordinates $\{\gamma_n\}$.

\begin{theorem}
\label{thm: respace_equiv}
Let $\{c_n\}_{n\geq 1}$ and $\mu_k$ be as in Lemma \ref{lem: avg_equiv}, and suppose the rescaling function $\phi$ is differentiable and satisfies \eqref{eq:phi_matching} and \eqref{eq:small_deriv}. Then conditions (i) or (ii) of Lemma \ref{lem: avg_equiv} are equivalent to:
\begin{enumerate}[label = (\roman*)]
\setcounter{enumi}{2}
\item For all $\eta \in C_c(\R^k)$
\begin{equation*}
\lim_{T\rightarrow\infty} \frac{1}{T} \int_0^T \sum_{n\in \Z_\ast^k} \eta\big( \phi(c_{n_1})-t, \cdots, \phi(c_{n_k})-t\big)\, dt = \int_{\R^k} \eta(x)\, d\mu_k(x).
\end{equation*}
\end{enumerate}
\end{theorem}

\begin{proof}
We begin by considering points $c_n$ and $t$ such that either $|\phi(c_n)-\phi(t)| \leq A$ or $|L(t)(c_n-t)| \leq A$ for some fixed constant $A$. A crude bound reveals that in either case $t/2 \leq c_n \leq 2t$ for sufficiently large $t$. We claim that we have then
\begin{equation}
\label{eq:phi_to_L}
\phi(c_n)-\phi(t) = L(t)(c_n-t)+o_{t\rightarrow\infty}(1).
\end{equation}
If $c_n = t$ this is clear so we may suppose $c_n \neq t$. Then for some $x$ in between $t$ and $c_n$,
\begin{equation*}
\label{eq:deriv_approx}
\frac{\phi(c_n)-\phi(t)}{c_n-t} = \phi'(x) = \Phi(x) + x\Phi'(x) = (1+o(1))\Phi(x).
\end{equation*}
But $t/2 \leq x \leq 2t$ and $\Phi$ is slowly varying so the above is
\begin{equation*}
= (1+o(1))\Phi(t) = (1+o(1))L(t).
\end{equation*}
If $|c_n-t| \leq A/L(t)$ then \eqref{eq:phi_to_L} follows immediately. On the other hand if $|\phi(c_n)-\phi(t)| \leq A$ then \eqref{eq:phi_to_L} follows by swapping the factor $(1+o(1))$ to the other side.

With \eqref{eq:phi_to_L} established, suppose \emph{(i)} of Lemma \ref{lem: avg_equiv}. Then as in the proof of Lemma \ref{lem: avg_equiv}, if $\eta$ is supported in a box $K = [-1,1]^k$, we have
\begin{multline*}
\sum_{n \in \Z_\ast^k} \eta\Big(L(t)(c_{n_1}-t),\cdots, L(t)(c_{n_k}-t)\Bigr) \\
= \sum_{n\in \Z_\ast^k} \eta\big( \phi(c_{n_1})-\phi(t), \cdots, \phi(c_{n_k})-\phi(t)\big) \\
+ o_{t\rightarrow\infty}\Big(\sum_{n \in \Z_\ast^k} \ind_K\Big(L(t)(c_{n_1}-t),\cdots, L(t)(c_{n_k}-t))\Big).
\end{multline*}
Integrating in $t$ and bounding the error term using \emph{(i)} for a test function majorizing $\ind_K$, we have
\begin{equation*}
\int_0^T \sum_{n\in \Z_\ast^k} \eta\big( \phi(c_{n_1})-\phi(t), \cdots, \phi(c_{n_k})-\phi(t)\big) \, dt = T \int_{\R^k} \eta\, d\mu_k + o(T).
\end{equation*}
By differencing, the same estimate can be obtained for an integral from $T$ to $2T$ rather than an integral from $0$ to $T$.

Now note $\phi'(t) = \Phi(t)+t\Phi'(t) \sim \Phi(t) \sim \Phi(T)$ uniformly for $t \in [T,2T]$ (meaning the ratio of any two sides of an asymptotic is $1+o_{T\rightarrow\infty}(1)$ for all $t \in [T,2T]$). Hence
\begin{multline*}
\int_T^{2T} \sum_{n\in \Z_\ast^k} \eta\big( \phi(c_{n_1})-\phi(t), \cdots, \phi(c_{n_k})-\phi(t)\big) \phi'(t)\, dt \\
= T \Phi(T) \int_{\R^k} \eta\, d\mu_k + o(T\Phi(T)) \\
= (\phi(2T)-\phi(T)) \int_{\R^k} \eta\, d\mu_k + o_{T\rightarrow\infty}(\phi(2T)-\phi(T)),
\end{multline*}
with the last line following from the slow variation of $\Phi$. We sum dyadically and make a change of variable $\tau = \phi(t)$ to obtain
\begin{equation*}
\int_0^{\phi(T)} \sum_{n\in \Z_\ast^k} \eta\big( \phi(c_{n_1})-\tau, \cdots, \phi(c_{n_k})-\tau\big) dt = \phi(T) \int_{\R^k} \eta\, d\mu_k + o(\phi(T)).
\end{equation*}
Since any sufficiently large number lies in the image of $\phi(T)$, this proves \emph{(iii)}.

The demonstration that $\emph{(iii)}$ implies $\emph{(i)}$ of Lemma \ref{lem: avg_equiv} is essentially identical, but proceeds in the opposite direction.
\end{proof}

\section{The distribution of consecutive spacings}
\label{sec: gaps}

In this section we consider a non-decreasing sequence of positive real numbers $\{\cc_n\}_{n\geq 1}$ satisfying
\begin{equation}
\label{eq:c_tilde_count}
N_{\tilde{c}}(T) = |\{n:\, \cc_n \in (0,T]\}| \sim T,
\end{equation}
and the corresponding point processes $\xi_T$ with configurations given by $\{\cc_n-t\}_{n \geq 1}$ with $t$ a random variable chosen uniformly from $[0,T]$. We will show that the convergence of the processes $\xi_T$ to a limiting process, satisfying minimal conditions, is governed by the convergence of the consecutive spacings $\cc_{n+1}-\cc_n, \, \cc_{n+2}-\cc_n, \,...$ to a limiting distribution, where $n$ is sampled uniformly from $1$ to $N$ and $N\rightarrow\infty$. 

\subsection{Palm processes}
\label{subsec:palmprocesses}

Before we can discuss the consecutive spacing distribution of a limiting process, we must define the notion of a Palm process. References include \cite{Ma} (which has notation closest to our own), \cite[Ch. 10]{Ka1}, \cite[Ch. 11]{Ka2} and \cite[Sec. 4.2.6-4.2.7]{AnGuZe}. In what follows we give the essential background and cite results we will use.

For a real number $s$ and a set $B \subset \R$, let us use the notation $B+s = \{x+s:\, x \in B\}$ for right translation. On measures this induces a \emph{left} translation operator $\theta^s: \mathfrak{M} \rightarrow \mathfrak{M}$, defined by $\theta^s \nu(B) = \nu(B+s)$ for Borel measures $\nu$. A random measure $\mu$ on $\R$ is said to be \emph{stationary} if its distribution is invariant under translations: $\PP(\theta^s \mu \in E) = \PP(\mu \in E)$ for all $E \in \mathcal{M}$ and all $s \in \R$.

If $\xi$ is a stationary point process on $\R$, then there is a constant $I_\xi \in [0,\infty]$ such that $\EE\, \xi(B) = I_\xi \cdot |B|$ for all Borel measurable sets $B$, where $|B|$ is the Lebesgue measure of $B$. $I_\xi$ is referred to as the \emph{intensity} of the stationary point process $\xi$. Note that a stationary point process $\xi$ on $\R$ having intensity $c$ is equivalent to the claim $d\rho_1(x) = c\, dx$. It is clear that if $\xi$ is stationary and $I_\xi > 0$ then almost surely $\xi$ is infinite in both directions. (Further discussion can be found just after the proof of Lemma 11.1 in \cite{Ka2})

\begin{definition}
\label{def:palm}
If $\xi$ is a stationary point process on $\R$ with intensity $I_\xi \in (0,\infty)$, we define the Palm process $\xi^0$ associated to $\xi$ by
\begin{equation}
\label{eq:palm_def}
\PP(\xi^0 \in E) = \frac{1}{I_\xi} \EE\, \int_0^1 \mathbf{1}_E(\theta^s\xi)\, d\xi(s),
\end{equation}
for any $E \in \mathcal{N}$.
\end{definition}

By stationarity we may equivalently define
$$
\PP(\xi^0 \in E) = \frac{1}{I_\xi |B|} \EE\, \int_B \mathbf{1}_E(\theta^s\xi)\, d\xi(s),
$$
for any Borel $B \in \mathcal{R}$ with $|B|>0$. This is the definition given in \cite[Eq. (23)]{Ma}. (Indeed, the reader may directly verify that \eqref{eq:palm_def} does indeed define a probability measure on $(\mathfrak{N}, \mathcal{N})$.)

Note that \eqref{eq:palm_def} implies 
\begin{equation}
\label{eq:palm_expec}
\EE f(\xi^0) = \frac{1}{I_\xi\cdot |B|} \EE\, \int_B f(\theta^s\xi)\, d\xi(s),
\end{equation}
for any bounded measurable $f: \mathfrak{N} \rightarrow \R$ and any Borel $B \in \mathcal{R}$ with $|B| \geq 0$.

For $\xi$ satisfying the conditions of Definition \ref{def:palm}, we have almost surely $\xi^0(\{0\}) \geq 1$. For we have
\begin{multline*}
\PP(\xi^0(\{0\}) \geq 1) = \frac{1}{I_\xi} \EE \int_0^1 \big\llbracket \theta^s \xi(\{0\}) \geq 1 \big\rrbracket\, d\xi(s) \\
= \frac{1}{I_\xi} \EE \int_0^1 \big\llbracket \xi(\{s\}) \geq 1 \big\rrbracket \, d\xi(s) = \frac{1}{I_\xi} \EE\, \xi[0,1] = 1,
\end{multline*}
with the second to last step following because $\xi(\{s\}) \geq 1$ at each atom of $\xi$. (Use \eqref{eq:pointmass} to decompose $\xi$ into atoms, and recall Knuth bracket notation.)

Indeed the Palm process $\xi^0$ can be thought of as $\xi$ conditioned on having an atom at the origin. More on this analogy can be found in the references given and it will be elaborated by the examples studied below.

We have the following convergence criterion

\begin{proposition}
\label{prop:palm_conv}
Let $\xi, \xi_1, \xi_2,...$ be stationary point processes on $\R$ with intensities $I_\xi, I_{\xi_1}, I_{\xi_2},...$ all finite and nonzero. Then any two of the following statements imply the third:
\begin{enumerate}[label = (\roman*)]
\item $I_{\xi_\ell}\rightarrow I_\xi$
\item $\xi_\ell \darrow \xi$
\item $\xi_\ell^0 \darrow \xi^0$.
\end{enumerate}
\end{proposition}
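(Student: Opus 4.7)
The central tool is the Palm identity: extending \eqref{eq:palm_expec} from $g = \ind_B$ to arbitrary $g \in C_c(\R)$ with $g \geq 0$ by a standard monotone class approximation, we have for every bounded measurable $F \colon \mathfrak{N} \to \R$
\[
  I_\xi \cdot \EE F(\xi^0) \cdot \int_\R g(s)\, ds \;=\; \EE \int_\R g(s)\, F(\theta^s \xi)\, d\xi(s). \qquad (\star)
\]
Before applying this I would establish an auxiliary continuity fact: for $F \in C_b(\mathfrak{N})$ and $g \in C_c(\R)$, the functional
\[
  \Psi(\nu) := \int_\R g(s)\, F(\theta^s \nu)\, d\nu(s)
\]
satisfies $|\Psi(\nu)| \leq \|F\|_\infty \|g\|_\infty\, \nu(\supp g)$ and is continuous in the vague topology at every $\nu$ having no atom on $\partial \supp g$. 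The continuity follows from the joint vague continuity of the translation $(s,\nu) \mapsto \theta^s \nu$ together with the stability of atom positions under vague convergence on neighborhoods of $\supp g$. Since $\xi$ is stationary, $\PP(\xi(\{a\})>0) = 0$ for every fixed $a$, so $\xi$ lies almost surely in this continuity set.

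For (i)+(ii)$\Rightarrow$(iii): Theorem \ref{thm:distconv} and (ii) give $\xi_\ell(\supp g) \darrow \xi(\supp g)$, while (i) yields $\EE \xi_\ell(\supp g) = I_{\xi_\ell} |\supp g| \to I_\xi |\supp g| = \EE \xi(\supp g)$. Scheff\'e's lemma then upgrades to $L^1$ convergence, hence to uniform integrability of $\xi_\ell(\supp g)$ and, via the pointwise bound above, of $\Psi(\xi_\ell)$. Combined with the almost-sure continuity of $\Psi$ at $\xi$ and Skorokhod representation, Vitali's theorem yields $\EE \Psi(\xi_\ell) \to \EE \Psi(\xi)$. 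Dividing $(\star)$ by $I_{\xi_\ell}\int g$, which is bounded away from $0$ by (i), gives $\EE F(\xi_\ell^0) \to \EE F(\xi^0)$ for every $F \in C_b(\mathfrak{N})$, which is (iii).

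For (i)+(iii)$\Rightarrow$(ii): the Palm process and intensity recover the law of $\xi$ through the inverse Palm (Ryll-Nardzewski/Mecke) identity
\[
  \EE G(\xi) \;=\; I_\xi \cdot \EE \int_0^{X^+(\xi^0)} G(\theta^{-s} \xi^0)\, ds,
\]
where $X^+(\nu) := \inf\{x > 0 : \nu(0,x] \geq 1\}$. Taking $G(\nu) = \exp(-\int f\, d\nu)$ for nonnegative $f \in C_c(\R)$, a parallel argument—combining (iii), (i), and a similar continuity and uniform-integrability analysis for the inner integrand—produces convergence of Laplace functionals $\EE\exp(-\int f\,d\xi_\ell) \to \EE\exp(-\int f\,d\xi)$, which by Theorem \ref{thm:distconv} is equivalent to (ii).

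The implication (ii)+(iii)$\Rightarrow$(i) is the most delicate and I would prove it by applying Fatou's lemma in two directions. For $g \in C_c(\R)$ with $g \geq 0$, $\int g = 1$, (ii) gives $\int g\, d\xi_\ell \darrow \int g\, d\xi$, and Fatou yields $I_\xi = \EE\int g\,d\xi \leq \liminf_\ell \EE\int g\,d\xi_\ell = \liminf_\ell I_{\xi_\ell}$. For the reverse inequality, specialize the inverse Palm identity to $G \equiv 1$: this produces $\EE X^+(\xi^0) = 1/I_\xi$, and analogously $\EE X^+(\xi_\ell^0) = 1/I_{\xi_\ell}$. Assuming simplicity so that $X^+$ is continuous in the vague topology on configurations $\nu$ with $\nu(\{0\}) = 1$, (iii) and Skorokhod representation give $X^+(\xi_\ell^0) \to X^+(\xi^0)$ almost surely, and Fatou yields $1/I_\xi \leq \liminf_\ell 1/I_{\xi_\ell} = 1/\limsup_\ell I_{\xi_\ell}$, whence $\limsup_\ell I_{\xi_\ell} \leq I_\xi$. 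The main technical obstacle throughout, and the most delicate point in this last implication, is verifying the continuity and uniform integrability required to pass $(\star)$ to the limit—in particular the discontinuity of $X^+$ caused by the guaranteed atom of $\xi^0$ at the origin, which I would handle either by restricting to simple configurations or through an appropriate smoothed truncation of $X^+$.
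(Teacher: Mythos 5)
The paper does not actually prove this proposition: it is quoted from \cite[Prop.~7]{Ma}, itself a special case of \cite[Thm.~6.1]{Ka3}. So any self-contained argument is necessarily a different route. Your direction (i)+(ii)$\Rightarrow$(iii) is essentially the standard argument behind those references and is sound modulo routine verification: Skorokhod representation, almost-sure continuity of $\Psi$ at $\xi$ (using that a stationary process of finite intensity has no fixed atoms), and uniform integrability of $\xi_\ell(\supp g)$ obtained from convergence in distribution together with convergence of means.

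The other two implications contain a genuine gap. First, the inversion formula $\EE\, G(\xi) = I_\xi\, \EE \int_0^{X^+(\xi^0)} G(\theta^{-s}\xi^0)\, ds$ is itself a nontrivial theorem of comparable depth to the statement being proved; you invoke it without proof, and it needs care for non-simple processes, which the proposition does not exclude. Second, and more seriously, every use of $X^+ = x_1$ as a test functional on the Palm processes collides with the discontinuity recorded in the paper's own Proposition \ref{prop:meas_to_points}: $x_1$ fails to be vaguely continuous at every $\nu$ with $\nu(\{0\}) \geq 1$, and the Palm process satisfies $\xi^0(\{0\}) \geq 1$ almost surely, so the continuous mapping theorem cannot be applied to $X^+(\xi_\ell^0)$ as written. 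You flag this obstacle, but neither proposed remedy is carried out: restricting to simple configurations does not by itself restore continuity of $x_1$ on $\mathfrak{N}$ (perturbing the origin atom to either side still produces a jump). What actually rescues the argument is that under a Skorokhod coupling each $\xi_\ell^0$ also has an atom exactly at the origin, and the restriction of $x_1$ to $\{\nu : \nu(\{0\}) = 1\}$ is continuous at simple configurations with an isolated origin atom --- but this uses simplicity of $\xi$, which is not among the hypotheses, and the ``smoothed truncation'' alternative is never specified. As it stands, the implications (i)+(iii)$\Rightarrow$(ii) and (ii)+(iii)$\Rightarrow$(i) --- the former being exactly the one the paper needs --- are incomplete.
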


For us, the implication that \emph{(i)} and \emph{(iii)} imply \emph{(ii)} will be the most important consequence of this result.

\begin{proof}
This is \cite[Prop. 7]{Ma}. (Note that, as \cite{Ma} states, this is a special case of a more general result first proved in \cite[Thm. 6.1]{Ka3}.)
\end{proof}

A point process on $\R$ is said to be \emph{simple} if 
$$
\PP(\exists x \in \R: \, \xi(\{x\}) \geq 2) = 0.
$$
(The above event, it may be checked, is in $\mathcal{N}$ due to the separability of $\R$.) If $\xi$ is simple then it follows immediately from the definition $\xi^0$ is simple also.

In analogy with the definition for point processes on $\R$, a sequence of random variables $(X_j)_{j\in \Z}$ is said to be \emph{stationary} if for any positive $K$ and any shift $a \in \Z$, the random vector $(X_{-K},...,X_K)$ has the same distribution as $(X_{-K+a},...,X_{K+a})$.

\begin{proposition}
\label{prop:stationary_diff}
If $\xi$ is a stationary simple point process on $\R$ with intensity $I_\xi$ nonzero and finite, the sequence of random variables
$$
(x_{j+1}(\xi^0)-x_{j}(\xi^0))_{j \in \Z}
$$
is stationary.
\end{proposition}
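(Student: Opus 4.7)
The plan: I would show that the ``shift-to-next-atom'' map $T:\nu\mapsto\theta^{x_1(\nu)}\nu$ leaves the distribution of $\xi^0$ invariant, and then read off stationarity of the gap sequence from this fact. Since $\xi$ is simple, so is $\xi^0$; combined with the fact (already established in the paper) that $\xi^0(\{0\})\ge 1$ almost surely, this forces $\xi^0(\{0\})=1$, hence $x_0(\xi^0)=0$ and $x_1(\xi^0)>0$ almost surely, so $T$ is well-defined on a set of full measure.

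A direct check from Proposition \ref{prop:meas_to_points} gives $x_j(T\xi^0)=x_{j+1}(\xi^0)-x_1(\xi^0)$ for every $j\in\Z$, so $T$ acts on the spacing sequence $\big(x_{j+1}(\xi^0)-x_j(\xi^0)\big)_{j\in\Z}$ as the index shift $j\mapsto j+1$. Thus once $T\xi^0\overset{d}{=}\xi^0$ is known, shifting the spacing sequence by one preserves its distribution; iterating $T$ and applying the mirror ``shift-to-previous-atom'' map $T^{-1}\nu:=\theta^{x_{-1}(\nu)}\nu$ then gives invariance under every integer shift, i.e.\ stationarity.

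To establish $T\xi^0\overset{d}{=}\xi^0$ I would apply \eqref{eq:palm_expec} with $B=[0,L]$ to a bounded measurable $F:\mathfrak{N}\to\R$:
\begin{equation*}
\EE F(T\xi^0)=\frac{1}{I_\xi L}\,\EE\int_0^L F\bigl(\theta^{x_1(\theta^s\xi)}\theta^s\xi\bigr)\,d\xi(s).
\end{equation*}
Since $d\xi$ is supported on the atoms of $\xi$ and $\xi$ is simple, at an atom $s=x_n(\xi)$ one has $x_1(\theta^s\xi)=x_{n+1}(\xi)-s$ and hence $\theta^{x_1(\theta^s\xi)}\theta^s\xi=\theta^{x_{n+1}(\xi)}\xi$, so the integral collapses to
\begin{equation*}
\EE F(T\xi^0)=\frac{1}{I_\xi L}\,\EE\sum_{n:\,x_n(\xi)\in[0,L]} F\bigl(\theta^{x_{n+1}(\xi)}\xi\bigr).
\end{equation*}
Reindexing $m=n+1$, this sum differs from the corresponding representation $\EE F(\xi^0)=(I_\xi L)^{-1}\EE\sum_{m:\,x_m(\xi)\in[0,L]}F(\theta^{x_m(\xi)}\xi)$ of $\EE F(\xi^0)$ by only two ``endpoint'' contributions, coming from the first and last atoms of $\xi$ in $[0,L]$. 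Since $F$ is bounded, this discrepancy is $O(1/L)$; letting $L\to\infty$ yields $\EE F(T\xi^0)=\EE F(\xi^0)$, and hence $T\xi^0\overset{d}{=}\xi^0$.

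The substantive step is the boundary-term bookkeeping in the third paragraph: one must verify that the index shift $m=n+1$ produces exactly one extra/missing atom at each end of $[0,L]$, so that the total error is uniformly bounded in $L$ and disappears after the $(I_\xi L)^{-1}$ normalization. Everything else---the translation identity $x_j(T\xi^0)=x_{j+1}(\xi^0)-x_1(\xi^0)$, the passage from shift-by-one invariance of $\xi^0$ to full stationarity of the spacing sequence, and the symmetric treatment of negative shifts via $T^{-1}$---is routine.
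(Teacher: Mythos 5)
Your argument is correct, and it is essentially the standard proof: the paper itself only cites Lemma 4.2.42 of Anderson--Guionnet--Zeitouni, whose proof rests on exactly the invariance of the Palm distribution under the shift-to-the-next-atom map that you establish (with the same $O(1/L)$ boundary-term bookkeeping). The only points worth making explicit are that $\xi^0$ is almost surely infinite in both directions (so the $x_j(\xi^0)$ are all defined), which follows from the translation-invariance of that event together with \eqref{eq:palm_def}, and that simplicity guarantees the atoms are a.s.\ strictly increasing so that $x_1(\theta^{x_n(\xi)}\xi)=x_{n+1}(\xi)-x_n(\xi)$; both are routine.
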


\begin{proof}
This follows exactly as in the proof of Lemma 4.2.42 of \cite{AnGuZe}. (Note that in the statement of that Lemma the point process is assumed to be determinantal, but by following the proof given there, one sees that the conclusions of the above Proposition follow with no determinantal hypothesis.
\end{proof}

\subsection{Consecutive spacings in the limit}

We now apply the machinery of Palm processes to the consecutive spacings of a sequence of positive real numbers $\{\cc_n\}_{n\geq 1}$ satisfying condition \eqref{eq:c_tilde_count}, and recall for $T > 0$, the point process $\xi_T$ is defined as the random measure
\begin{equation}
\label{eq:nu_def}
\tnu_t = \sum_{n\geq 1} \delta_{\cc_n-t},
\end{equation}
where $t \in (0,T]$ is random and chosen uniformly.

$\xi_T$ is not stationary (nor is it infinite in both directions). We rectify this in the following way: take $N = N_{\cc}(T)$, and for $n \in \Z$ define
$$
\cb_n = \cb_n(T) = \cc_m + rT, \quad \textrm{where}\; n = m + rN,\; \textrm{for}\; 1 \leq m \leq N,\; r\in \Z.
$$
Thus the sequence $\{\cb_n\}_{n\in \Z}$ consists of the points $\{\cc_1, \cc_2, ..., \cc_N\} \subset (0,T]$ tessellated by translations of size $T$ to cover all of $\R$.

Define the point process $\breve{\xi}_T$ to be the random measure
$$
\bnu_t = \sum_{n \in \Z} \delta_{\cb_n-t},
$$
where $t \in (0,T]$ is random and chosen uniformly. By the periodicity of $\{\cb_n\}_{n\in \Z}$, the point process $\breve{\xi}_T$ is stationary.

The following Proposition confirms $\breve{\xi}_T$ is a good approximation to $\xi_T$ as $T\rightarrow\infty$.

\begin{proposition}
\label{prop:xi_to_period}
Let $\{\cc_n\}_{n\geq 1}$ be any locally finite collection of positive points and define $\xi_T$ and $\breve{\xi}_T$ as above. For a point process $\xi$, we have $\xi_T \darrow \xi$ if and only if $\breve{\xi}_T \darrow \xi$.
\end{proposition}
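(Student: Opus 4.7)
The plan is to couple $\xi_T$ and $\breve{\xi}_T$ by realizing both with the same uniform random variable $t \in (0,T]$, and to observe that under this coupling the two random measures agree on any fixed compact set except when $t$ lies within a bounded distance of the endpoints $0$ or $T$. Once this is made precise, Theorem \ref{thm:distconv} reduces the equivalence to an elementary Slutsky-type argument for the linear statistics $\int f\, d\xi_T$ and $\int f\, d\breve{\xi}_T$.

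Concretely, fix $f \in C_c(\R)$ with $\supp f \subset (-A, A)$, and take $T > 2A$. The sum $\int f\, d\xi_T = \sum_{n \geq 1} f(\cc_n - t)$ receives contributions only from those $n$ with $\cc_n \in (t - A, t + A)$. For $t \in [A, T - A]$, this interval lies inside $(0, T)$, so the relevant indices are all in $\{1, \ldots, N\}$. By the construction of $\cb_n$, the only translates $\cb_n$ lying in $(0, T)$ are precisely $\cc_1, \ldots, \cc_N$; hence for such $t$ one has the pointwise identity $\int f\, d\xi_T = \sum_{n=1}^N f(\cc_n - t) = \int f\, d\breve{\xi}_T$. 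Consequently the coupled real-valued random variables $\int f\, d\xi_T$ and $\int f\, d\breve{\xi}_T$ can differ only for $t$ in the set $(0, A) \cup (T - A, T]$ of Lebesgue measure $2A$, so
\[
\PP\!\left( \int f\, d\xi_T \neq \int f\, d\breve{\xi}_T \right) \leq \frac{2A}{T} \longrightarrow 0,
\]
and in particular the difference $\int f\, d\xi_T - \int f\, d\breve{\xi}_T$ tends to $0$ in probability.

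To conclude, I invoke the equivalence (i) $\iff$ (ii) of Theorem \ref{thm:distconv}: convergence in distribution of random measures is equivalent to convergence in distribution of $\int f\, d(\cdot)$ for every $f \in C_c(\R)$. If $\xi_T \darrow \xi$, then $\int f\, d\xi_T \darrow \int f\, d\xi$ for each such $f$, and Slutsky's theorem combined with the vanishing of the coupled difference yields $\int f\, d\breve{\xi}_T \darrow \int f\, d\xi$, whence $\breve{\xi}_T \darrow \xi$. The reverse implication follows identically, exchanging the roles of $\xi_T$ and $\breve{\xi}_T$.

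No serious obstacle arises: the only step that requires care is the verification that the coupled measures genuinely agree on $[-A, A]$ for $t \in [A, T - A]$, which reduces to unwinding the definition of the periodization $\cb_n$. The argument is purely deterministic once the coupling is fixed, and stationarity of $\breve{\xi}_T$ plays no role at this stage (though it is of course needed subsequently to apply the Palm-process machinery of Proposition \ref{prop:palm_conv}).
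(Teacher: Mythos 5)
Your proof is correct and is essentially identical to the paper's: both couple the two processes via the same uniform $t$, observe that the linear statistics $\int f\,d\xi_T$ and $\int f\,d\breve{\xi}_T$ coincide except for $t$ within a bounded distance of the endpoints of $(0,T]$ (an event of probability $O(1/T)$), and then conclude via Theorem \ref{thm:distconv} and a Slutsky-type argument. No gaps.
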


\begin{proof}
For $\xi_T\darrow \xi$, then by Theorem \ref{thm:distconv} we have $\int f\, d\xi_T \darrow \int f\, d\xi$ for any $f \in C_c(\R)$. But if $\supp f \subset [-\alpha, \beta]$, then
$$
\int f\, d\nu_t = \int f\, d\bnu_t, \quad \textrm{for}\; t \in (\alpha, T -\beta).
$$
Hence as $T\rightarrow\infty$, the above relation holds with probability $1 - o(1)$, and $\int f\, d\xi_T - \int f\, d\breve{\xi}_T$ converges to $0$ in probability and therefore in distribution. Thus $\int f\, d\breve{\xi}_T \darrow \int f\, d\xi$ as claimed. The opposite implication is the same.
\end{proof}

The Palm process $\breve{\xi}^0_T$ associated to $\breve{\xi}_T$ has a nice characterization:

\begin{proposition}
\label{prop:period_palm}
For $N\geq 1$ define the point process $\breve{\Xi}_T$ as the random measure
\begin{equation}
\label{eq:up_def}
\breve{\upsilon}_n = \sum_{j \in \Z} \delta_{\cb_j-\cb_n},
\end{equation}
where $n$ is a random integer chosen uniformly from $1$ to $N = N_{\cc}(T)$. Then for $\{\cc_n\}_{n\geq 1}$ any locally finite collection of positive points, the point processes $\breve{\xi}^0_T$ and $\breve{\Xi}_T$ are identically distributed.
\end{proposition}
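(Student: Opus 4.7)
The strategy is to apply the formula \eqref{eq:palm_expec} for the Palm process with the choice $B = (0,T]$ and verify that the resulting probability law coincides with that of $\breve{\Xi}_T$. First I would record the intensity of $\breve{\xi}_T$: because $\{\cb_n\}_{n\in\Z}$ is $T$-periodic with exactly $N := N_{\cc}(T)$ points in every half-open interval of length $T$, the counting variable $\bnu_t((0,T]) = N$ deterministically, whence $I_{\breve{\xi}_T} = N/T$ and the prefactor $1/(I_{\breve{\xi}_T}|B|)$ in \eqref{eq:palm_expec} reduces to $1/N$.

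The main observation is that the parameter $t$ drops out of the integrand: if $s = \cb_n - t$ is any atom of $\breve{\xi}_T$ lying in $(0,T]$, a direct calculation from the identity $\theta^s\nu(A) = \nu(A+s)$ gives
\[
\theta^{\cb_n - t}\breve{\xi}_T \;=\; \sum_{m \in \Z} \delta_{\cb_m - \cb_n} \;=\; \breve{\upsilon}_n,
\]
which is independent of $t$. Moreover the periodicity $\cb_{n+N} = \cb_n + T$ forces $\breve{\upsilon}_{n+N} = \breve{\upsilon}_n$, so $\breve{\upsilon}_n$ depends only on the residue class of $n$ modulo $N$.

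The remainder is bookkeeping. For each $t \in (0,T]$ the atoms of $\breve{\xi}_T$ in $(0,T]$ are $\{\cb_n - t : n \in S_t\}$, where $S_t \subset \Z$ is a set of exactly $N$ indices with one representative of each residue class modulo $N$ (since $\{\cb_n\}$ has one point per class per period). Combining this with the previous paragraph,
\[
\int_0^T \mathbf{1}_E(\theta^s \breve{\xi}_T)\, d\breve{\xi}_T(s) \;=\; \sum_{n \in S_t} \mathbf{1}_E(\breve{\upsilon}_n) \;=\; \sum_{m=1}^N \mathbf{1}_E(\breve{\upsilon}_m),
\]
the final sum being independent of $t$. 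Taking expectations and dividing by $N$,
\[
\PP(\breve{\xi}^0_T \in E) \;=\; \frac{1}{N}\sum_{m=1}^N \mathbf{1}_E(\breve{\upsilon}_m) \;=\; \PP(\breve{\Xi}_T \in E),
\]
which is precisely the law of $\breve{\upsilon}_n$ under $n$ uniform on $\{1,\ldots,N\}$.

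There is no serious obstruction to this argument; it amounts to unwinding the definitions of the Palm process and of $\breve{\xi}_T$. The enabling step is the choice of integration window $B = (0,T]$ in \eqref{eq:palm_expec}, which aligns with the period of $\{\cb_n\}$ and allows the $N$ atoms in one period to collapse cleanly onto the $N$ distinct configurations $\breve{\upsilon}_1,\ldots,\breve{\upsilon}_N$.
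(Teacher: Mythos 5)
Your proposal is correct and follows essentially the same route as the paper: apply \eqref{eq:palm_expec} with $B=(0,T]$, note $I_{\breve{\xi}_T}=N/T$, observe that $\theta^{\cb_n-t}\bnu_t=\sum_j\delta_{\cb_j-\cb_n}$ is independent of $t$, and use the $T$-periodicity to collapse the atoms in one period onto $\breve{\upsilon}_1,\dots,\breve{\upsilon}_N$. The only cosmetic difference is that you work with indicators $\mathbf{1}_E$ while the paper phrases the identity for general bounded measurable $f$.
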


\begin{proof}
For bounded measurable $f: \mathfrak{N} \rightarrow \R$, we will use \eqref{eq:palm_expec} with $B = (0,T]$. Before proceeding note that $\breve{\xi}_T(0,T] = N$ almost surely, so $I_{\breve{\xi}_T}\cdot T = \EE\, \breve{\xi}_T(0,T] = N$. Hence
\begin{align}
\label{eq:expec_fxibreve}
\notag \EE f(\breve{\xi}^0_T) &= \frac{1}{N} \EE \int_0^T f(\theta^s \breve{\xi}_T)\, d\breve{\xi}_T(s) \\
\notag &= \frac{1}{N} \frac{1}{T} \int_0^T \int_0^T f(\theta^s \bnu_t)\, d\bnu_t(s)\, dt \\
&= \frac{1}{N} \frac{1}{T} \int_0^T \sum_{n:\, \cb_n -t \in (0,T]} f(\theta^{\cb_n-t} \bnu_t)\, dt.
\end{align}
But
$$
\theta^{\cb_n-t} \bnu_t = \sum_{j\in \Z} \delta_{(\cb_j-t)-(\cb_n-t)} = \sum_{j \in \Z} \delta_{\cb_j - \cb_n}
$$
and because a sum over $\cb_n-t \in (0,T]$ involves a sum over a complete set of $N$ consecutive $\cb_n$, by re-indexing the sum in $j$ we have
$$
\sum_{n:\, \cb_n -t \in (0,T]} f\Big( \sum_{j \in \Z} \delta_{\cb_j - \cb_n}\Big) = \sum_{n=1}^N f\Big( \sum_{j \in \Z} \delta_{\cb_j - \cb_n}\Big),
$$
for all $t$. Substituting this in \eqref{eq:expec_fxibreve} gives 
$$
\EE f(\breve{\xi}^0_T) = \frac{1}{N} \sum_{n=1}^N f\Big( \sum_{j \in \Z} \delta_{\cb_j - \cb_n}\Big)
$$
and the claim follows.
\end{proof}

We may now treat convergence to a limit.

\begin{proposition}
\label{prop:discrete_palm_conv}
Let $\{\cc_n\}_{n\geq 1}$ be a non-decreasing sequence of points satisfying \eqref{eq:c_tilde_count} and define $\xi_T$ by \eqref{eq:nu_def}. For $\xi$ a stationary point process on $\R$ with intensity $I_\xi = 1$, the following are equivalent:
\begin{enumerate}[label = (\roman*)]
	\item $\xi_T \darrow \xi$
	\item $\breve{\Xi}_T \darrow \xi^0$.
\end{enumerate}
\end{proposition}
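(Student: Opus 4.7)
The plan is essentially to assemble the three results that have just been proven: Proposition \ref{prop:xi_to_period} (that $\xi_T$ and $\breve{\xi}_T$ have the same distributional limits), Proposition \ref{prop:period_palm} (that $\breve{\Xi}_T$ has the same law as the Palm process $\breve{\xi}^0_T$), and Proposition \ref{prop:palm_conv} (that convergence of processes and convergence of Palm processes are equivalent in the presence of intensity convergence).

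First I would verify the intensity hypothesis needed to invoke Proposition \ref{prop:palm_conv}. The process $\breve{\xi}_T$ is stationary with $\breve{\xi}_T(0,T] = N_{\cc}(T)$ almost surely, so its intensity equals $I_{\breve{\xi}_T} = N_{\cc}(T)/T$. By assumption \eqref{eq:c_tilde_count} this converges to $1 = I_\xi$, so intensity convergence holds regardless of the other two statements.

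Next I would translate each of the statements (i) and (ii) into a statement about $\breve{\xi}_T$ and $\breve{\xi}^0_T$. By Proposition \ref{prop:xi_to_period}, (i) is equivalent to $\breve{\xi}_T \darrow \xi$. By Proposition \ref{prop:period_palm}, $\breve{\Xi}_T$ has the same distribution as $\breve{\xi}^0_T$, so (ii) is equivalent to $\breve{\xi}^0_T \darrow \xi^0$. The conclusion then follows immediately from Proposition \ref{prop:palm_conv}: since the intensities converge, the distributional convergence $\breve{\xi}_T \darrow \xi$ of the stationary processes is equivalent to the distributional convergence $\breve{\xi}^0_T \darrow \xi^0$ of their Palm processes.

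I do not expect any substantive obstacle: all of the work has been done in the preceding propositions, and this statement is just the recombination of those ingredients. The only minor point to be careful about is making sure that Proposition \ref{prop:palm_conv} is applicable, which requires $I_{\breve{\xi}_T}$ and $I_\xi$ to be finite and nonzero for each $T$; this is immediate from $N_{\cc}(T) \sim T$ (so $I_{\breve{\xi}_T} > 0$ for all sufficiently large $T$, which is all that matters for distributional convergence) and from the hypothesis $I_\xi = 1$.
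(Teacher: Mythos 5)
Your proposal is correct and is essentially identical to the paper's own proof: both translate (i) into $\breve{\xi}_T \darrow \xi$ via Proposition \ref{prop:xi_to_period}, identify $\breve{\Xi}_T$ with $\breve{\xi}^0_T$ via Proposition \ref{prop:period_palm}, and close the loop with Proposition \ref{prop:palm_conv} using $I_{\breve{\xi}_T} = N_{\cc}(T)/T \to 1$. Your extra remark checking that the intensities are finite and nonzero is a sensible (if minor) addition.
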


\begin{proof}
From Proposition \ref{prop:xi_to_period}, we have \emph{(i)} is equivalent to $\breve{\xi}_T\darrow \xi$. But we have of the intensity, $I_{\breve{\xi}_T} = N_{\cc}(T)/T \rightarrow 1$, so Proposition \ref{prop:palm_conv} implies $\breve{\xi}_T\darrow \xi$ is equivalent $\breve{\xi}_T^0 \darrow \xi^0$. But by Proposition \ref{prop:period_palm} this is the same thing as \emph{(ii)}.
\end{proof}

\begin{remark}
If one defines $\Xi_T$ as the random measure
$$
\upsilon_n = \sum_{j\geq 1} \delta_{\cc_j - \cc_n}
$$
where $n$ is a random integer chosen uniformly from $1$ to $N = N_{\cc}(T)$, then an argument similar to what we have used in Proposition \ref{prop:xi_to_period} shows that \emph{(ii)} is equivalent to $\Xi_T \darrow \xi^0$. But we will not require this in what follows.
\end{remark}

If $\xi$ is simple Proposition \ref{prop:palm_conv} can be stated somewhat more memorably.

\begin{theorem}
\label{thm:spacing_to_palm}
Let $\{\cc_n\}_{n\geq 1}$, $\xi_T$, and $\xi$ be as in Proposition \ref{prop:discrete_palm_conv} with the additional condition that $\xi$ is simple. Then the following are equivalent:
\begin{enumerate}[label = (\roman*)]
	\item $\xi_T \darrow \xi$
	\item For any positive $K$ we have the convergence of random vectors
$$
(\cc_{n+1}-\cc_n,\, ... \,, \cc_{n+K}-\cc_n) \darrow (x_1(\xi^0),..., x_K(\xi^0)),
$$
where $n$ is random and uniformly chosen from $1$ to $N$ with $N\rightarrow\infty$.
\end{enumerate}
\end{theorem}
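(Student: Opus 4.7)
The plan is to apply Proposition \ref{prop:discrete_palm_conv} to convert (i) into the convergence $\breve{\Xi}_T \darrow \xi^0$, and then to show this is equivalent to the spacing statement (ii). Since $\xi$ is simple, its Palm process $\xi^0$ satisfies $\xi^0(\{0\}) = 1$ almost surely, and assuming (w.l.o.g., up to negligible modifications) strict monotonicity of $\{\cc_n\}$ we similarly have $\breve{\Xi}_T(\{0\}) = 1$ almost surely. First I would remove these atoms at the origin by passing to $\tilde{\xi}^0 := \xi^0 - \delta_0$ and $\tilde{\Xi}_T := \breve{\Xi}_T - \delta_0$. Since the map $\mu \mapsto \mu - \delta_0$ only shifts each linear statistic $\int f\, d\mu$ by the constant $-f(0)$, it is continuous in the vague topology, so by the continuous mapping theorem $\breve{\Xi}_T \darrow \xi^0$ if and only if $\tilde{\Xi}_T \darrow \tilde{\xi}^0$.

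Now $\tilde{\xi}^0(\{0\}) = 0$ almost surely, so Theorem \ref{thm:vectorconv} applies and $\tilde{\Xi}_T \darrow \tilde{\xi}^0$ is equivalent to the convergence of random vectors $(x_{-K}(\tilde{\Xi}_T),\ldots,x_K(\tilde{\Xi}_T)) \darrow (x_{-K}(\tilde{\xi}^0),\ldots,x_K(\tilde{\xi}^0))$ for every $K \geq 1$. Unpacking the labels, the positive-side atoms of $\tilde{\Xi}_T$ are precisely $\cb_{n+1}-\cb_n, \cb_{n+2}-\cb_n, \ldots$, and these equal $\cc_{n+1}-\cc_n, \cc_{n+2}-\cc_n, \ldots$ for every $n \leq N-K$, which is a proportion $1-K/N = 1-o(1)$ of the uniformly sampled $n \in \{1,\ldots,N\}$. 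Likewise the positive-side atoms of $\tilde{\xi}^0$ are $x_1(\xi^0), x_2(\xi^0), \ldots$. Hence the positive-side coordinates of the two-sided vector convergence reproduce statement (ii), which immediately gives (i) $\Rightarrow$ (ii).

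For the converse the task is to upgrade from the one-sided convergence supplied by (ii) to the two-sided convergence required by Theorem \ref{thm:vectorconv}. Here I would invoke Proposition \ref{prop:stationary_diff}, which states that the spacings $(x_{j+1}(\xi^0)-x_j(\xi^0))_{j\in\Z}$ form a stationary sequence. On the finite-$T$ side, the spacing sequence $(\cb_{m+1}-\cb_m)_{m\in\Z}$ is exactly $N$-periodic apart from a single wrap-around gap, so shifting the uniformly chosen $n$ by any fixed integer $a$ changes the law of a consecutive spacing vector of length $K+L$ by only $O((K+L+|a|)/N) = o(1)$ in total variation. Combined with the stationarity of the limiting spacings, this lets me upgrade the forward convergence of $(\cc_{n+1}-\cc_n,\ldots,\cc_{n+K+L}-\cc_n)$ to the convergence of any consecutive chunk of $K+L$ spacings, and in particular the chunk straddling the atom at the origin that assembles the full two-sided vector Theorem \ref{thm:vectorconv} demands.

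The main obstacle is precisely this final step, the stationarity-based passage from one-sided to two-sided spacing convergence: without Proposition \ref{prop:stationary_diff} there would be no a priori reason that the forward spacing distribution of $\xi^0$ determines its backward spacing distribution, and this is what forces the argument through the Palm formalism rather than through generic facts about point processes. Handling the $O(1/N)$ boundary effects in the identification of $\cb$ with $\cc$, and the minor edge case of coincident $\cc_n$, requires only routine bookkeeping.
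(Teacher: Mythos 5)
Your proposal is correct and follows essentially the same route as the paper: Proposition \ref{prop:discrete_palm_conv} to reduce \emph{(i)} to $\breve{\Xi}_T \darrow \xi^0$, Theorem \ref{thm:vectorconv} to pass to vector convergence of the atoms (your $\delta_0$-subtraction is in fact a clean way to meet that theorem's hypothesis that the limit have no atom at the origin), and Proposition \ref{prop:stationary_diff} to upgrade the one-sided spacing convergence of \emph{(ii)} to the two-sided convergence needed for the converse. The only caveat is that strict monotonicity of $\{\cc_n\}$ is not a genuine ``w.l.o.g.'' for a fixed deterministic sequence: one must argue, as the paper does with its $n'$ device, that the proportion of coincident points is $o(1)$ under either hypothesis, using $\PP(\xi^0(\{0\})\geq 2)=0$ in one direction and $\PP(x_1(\xi^0)=0)=0$ in the other.
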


Of course \emph{(ii)} is intuitively close to the condition $\breve{\Xi}_T \darrow \xi^0$ treated above. Our proof will show that in the case that $\xi$ is simple this intuition is correct.

\begin{proof}
Let use first suppose \emph{(i)} and demonstrate \emph{(ii)}. From Proposition \ref{prop:discrete_palm_conv} we have that \emph{(i)} implies $\breve{\Xi}_T \darrow \xi^0$.

For an integer $n$, introduce the notation $n' = n'(T)$ to denote the largest integer such that $\cb_{n'} = \cb_n$. (Thus if $\cb_{n+1} > \cb_n$ then $n' = n$.)

We claim that because $\xi$ is simple, \emph{(i)} implies that
\begin{equation}
	\label{eq:nprime_equal_n}
	\PP(n' = n) \rightarrow 1
\end{equation}
for $n$ a random integer chosen uniformly from $1$ to $N = N_{\cc}(T)$, and where $T\rightarrow\infty$. For using Proposition \ref{prop:period_palm},
$$
\PP(n' \neq n) = \PP(\breve{\Xi}_T(\{0\})\geq2) \rightarrow \PP(\xi^0(\{0\})\geq 2) = 0.
$$

But note likewise from Proposition \ref{prop:period_palm}, $(x_j(\breve{\Xi}_T))_{j \in \Z}$ has the same distribution as $(\cb_{n'+j}-\cb_{n'})_{j \in \Z}$ for $n$ taken uniformly from $1$ to $N$. Thus from Theorem \ref{thm:vectorconv} we have
$$
(\cb_{n'-K}-\cb_{n'},\,....\,,\cb_{n'+K}-\cb_{n'}) \darrow (x_{-K}(\xi^0),...,x_K(\xi^0)),
$$
for any positive $K$, which by \eqref{eq:nprime_equal_n} is the same as
$$
(\cb_{n-K}-\cb_{n},\,....\,,\cb_{n+K}-\cb_{n}) \darrow (x_{-K}(\xi^0),...,x_K(\xi^0)).
$$
As $K$ is fixed it is clear that this is the same as
\begin{equation}
	\label{eq:c_gaps}
	(\cc_{n-K}-\cc_n,...,\cc_{n+K}-\cc_n) \darrow (x_{-K}(\xi^0),...,x_K(\xi^0)),
\end{equation}
which obviously contains \emph{(ii)}, since $N_{\cc}(T)$ ranges over all integers as $T$ goes to $\infty$.

In the other direction, note first by taking linear combinations of entries, \emph{(ii)} implies 
\begin{multline*}
(\cc_{n+1}-\cc_n, \cc_{n+2}-\cc_{n+1},..., \cc_{n+K}-\cc_{n+K-1}) \\
\darrow (x_1(\xi^0)-x_0(\xi^0) , x_2(\xi^0)-x_1(\xi^0) , ... , x_K(\xi^0)-x_{K-1}(\xi^0)),
\end{multline*}
for all positive $K$.

In turn this implies 
\begin{multline*}
(\cc_{n-K}-\cc_{n-K-1},...,\cc_{n+K}-\cc_{n+K-1}) \\
\darrow(x_{-K}(\xi^0)-x_{-K-1}(\xi^0) , ... , x_K(\xi^0)-x_{K-1}(\xi^0))
\end{multline*}
by shifting the average in $n$ and using Proposition \ref{prop:stationary_diff}. (Adopt the convention $c_{n-K} = 0$ for $n-K \leq 0$.) Again taking linear combinations of entries, this implies \eqref{eq:c_gaps}.

The rest of the path from \emph{(ii)} to \emph{(i)} goes exactly as above in the opposite order, with the additional observation that $\PP(n'=n) \rightarrow 1$ (for $n$ chosen uniformly from $1$ to $N\rightarrow\infty$) when \emph{(ii)} is assumed because
\[
\PP(\cc_{n+1}-\cc_n = 0) \rightarrow \PP(x_1(\xi^0) = 0) = 0.\qedhere
\]
\end{proof}

\begin{remark}
It is  possible to work out a similar but slightly more complicated criterion even when $\xi$ is not simple by incorporating equiprobable shifts of size $0, 1, 2, ..., \xi^0(\{0\})-1$ to the coordinates considered on the right hand side of \emph{(ii)}, but we do not pursue this here.
\end{remark}

\section{Applying to zeta zeros}
\label{sec: zeta zeros}

Let us now discuss an application of these results to zeta zeros and prove Theorem \ref{thm: main_zetazeros}.

\subsection{The sine-kernel process}
\label{subsec:sinekernel}

We have not yet defined the sine-kernel process mentioned at the start of the paper.

The sine-kernel process $\varsigma$ is a simple point process on $\R$ with correlation measures given by
\begin{equation}
	\label{eq:sine_corr}
	d\rho_k(x_1,...,x_k) = \det_{1 \leq i, j \leq k}\Big( S(x_i - x_j)\Big)\, d^k x.
\end{equation}
That a simple point process exists with these correlation measures is proved in \cite[Sec. 4.3.5]{HK} or \cite[p.~230]{AnGuZe}. Much in the same way that it is possible for two random variables to share the same moments, it is possible that two distinct point processes have the same correlation measures, but it is known that the correlation measures above uniquely characterize the sine-kernel process. (That is for any point process $\xi$ with correlation measures \eqref{eq:sine_corr}, the probabilities $\PP(\xi \in E)$ will be determined by this information, for any $E \in \mathcal{N}$.) For a proof that these correlation measures characterize the process, see \cite[Lemma 4.2.6]{HK}; the proof is closely related to Theorem \ref{thm:tail_bound_correlations}. Indeed we also have the following tail bound.
\begin{proposition}
\label{prop:sinekernel_tailbound}
For any interval $I \subset \R$,
\begin{equation}
	\label{eq:sinekerneltail}
	\PP(\varsigma(I)\geq t) \leq C e^{-ct}
\end{equation}
for positive constants $C$, $c$ which depend on $I$.
\end{proposition}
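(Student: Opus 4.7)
The plan is to exploit the determinantal structure of the sine-kernel process through a standard Chernoff bound. First I would invoke the Macchi--Soshnikov representation (see e.g.\ \cite[Thm. 4.5.3]{HK} or \cite[Sec. 4.2.6]{AnGuZe}): whenever $\varsigma$ is a determinantal point process with Hermitian, locally trace class kernel $K$ and $I\subset\R$ is bounded, the count $\varsigma(I)$ is equal in distribution to a sum $\sum_i B_i$ of independent Bernoulli random variables with parameters $\lambda_i\in[0,1]$, where $\{\lambda_i\}$ are the eigenvalues of the compact self-adjoint integral operator on $L^2(I)$ with kernel $K|_{I\times I}$. For the sine kernel this applies, and the sum of eigenvalues is
$$
\sum_i \lambda_i \;=\; \operatorname{tr}(K|_I) \;=\; \int_I S(0)\, dx \;=\; |I| \;<\; \infty.
$$

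Next I would apply a Chernoff bound to this Bernoulli sum. For any $s>0$, independence gives
$$
\PP(\varsigma(I)\geq t) \;\leq\; e^{-st}\,\EE\, e^{s\varsigma(I)} \;=\; e^{-st}\prod_i\bigl(1+\lambda_i(e^s-1)\bigr) \;\leq\; \exp\bigl(-st+(e^s-1)|I|\bigr),
$$
using $1+x\leq e^x$ termwise in the last step. Choosing, say, $s=1$ yields $\PP(\varsigma(I)\geq t)\leq e^{(e-1)|I|}\,e^{-t}$, which is exactly \eqref{eq:sinekerneltail} with $c=1$ and $C=e^{(e-1)|I|}$.

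The only non-routine input is the first step, the Bernoulli-sum representation of $\varsigma(I)$; once that is in hand the tail bound uses nothing specific to the sine kernel beyond $S(0)=1$ and the boundedness of $I$, and presents no real obstacle. An alternative route, if one wished to avoid quoting Macchi--Soshnikov, would be to bound the factorial moments $\EE\,(\varsigma(I))_k$ directly via the determinantal correlation measures \eqref{eq:sine_corr} and Hadamard's inequality on the kernel $S$, then pass to tails by Markov; but the Bernoulli-sum approach is cleaner and gives the sharper exponential rate.
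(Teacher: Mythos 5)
Your argument is correct, but it is not the route the paper takes: the paper simply cites \cite[Lemma 4.2.6]{HK}, whose proof runs through your ``alternative route,'' namely bounding the correlation intensities via Hadamard's inequality (this is where \eqref{eq:sineineq} comes from), deducing $\EE\, z^{\varsigma(I)}<\infty$ from the factorial-moment expansion, and reading off exponential tails. Your main argument instead uses the Macchi--Soshnikov/HKPV structural theorem that for a determinantal process with Hermitian locally trace class kernel satisfying $0\leq K\leq 1$ (which holds here, the sine kernel being the restriction of a Fourier projection), the count $\varsigma(I)$ is a sum of independent Bernoullis with parameters the eigenvalues of $K|_I$, after which the Chernoff bound is immediate. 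The trade-off: the paper's route needs only the elementary determinant inequality \eqref{eq:sineineq} and works for any process whose correlations are uniformly bounded in this way, whereas yours imports a heavier representation theorem but yields a cleaner computation and, by optimizing over $s$ rather than fixing $s=1$, recovers the statement in the paper's subsequent remark that any exponential rate $c$ is attainable (though still short of the subgaussian bound of \cite{TaVu}). One cosmetic point: as you note, the claim is only meaningful for bounded $I$, which is all the paper ever uses; you are right to restrict to that case.
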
  
\begin{proof}
This is entailed by \cite[Lemma 4.2.6]{HK}.
\end{proof}

\begin{remark}
The proof of \cite[Lemma 4.2.6]{HK} reveals the authors show that for any positive $c$, there exists a positive constant $C = C_c$ such that \eqref{eq:sinekerneltail} holds. In fact even more than this is true; see \cite[Lemma 16]{TaVu} for a subgaussian bound.
\end{remark}

\begin{remark}
The proof of \cite[Lemma 4.2.6]{HK} also contains a demonstration via Hadamard's inequality of the fact that
\begin{equation}
\label{eq:sineineq}
0 \leq \det_{1 \leq i, j \leq k}\Big( S(x_i - x_j)\Big) \leq S(0)^k = 1,
\end{equation}
which we will use in a moment.
\end{remark} 

For any $k$, any collection of intervals $I_1,...,I_k$ and non-negative integers $\lambda_1,...,\lambda_k$, one may compute the value of
\begin{equation}
\label{eq:sine_kernel_cylinders}
\PP(\varsigma(I_1) = \lambda_1,..., \varsigma(I_k) = \lambda_k),
\end{equation}
though formulas for this probability are more complicated than the expression \eqref{eq:sine_corr} for correlations. \cite[Thm. 2]{S} is a general formula based on Fredholm determinants from which these probabilities can be extracted. Alternatively the following formulation may be seen as more concrete.

\begin{proposition}
\label{prop:occupation_prop}
For mutually disjoint intervals $I_1,...,I_k$, and non-negative integers $\lambda_1,...,\lambda_k$
$$
\PP(\varsigma(I_1) = \lambda_1,..., \varsigma(I_k) = \lambda_k) = \frac{1}{\lambda!} \sum_{n \in \Z^k_{\geq 0}} \frac{(-1)^{|n|}}{n!} C(I, \lambda + n),
$$
where the series converges absolutely and here we have used the shorthand $\lambda = (\lambda_1,...,\lambda_k)$ with $\lambda!  = \lambda_1! \cdots \lambda_k$ (and likewise for the vector $n$), and we use the notation $|n| = n_1 + \cdots + n_k$ and for $a \in \Z_{\geq 0}^k$
$$
C(I,a) = \int_{I_1^{a_1}\times \cdots \times I_k^{a_k}} \det_{1 \leq i,j \leq A} \Big( S(x_i - x_j)\Big)\, d^A x,
$$
with $A = |a|$.
\end{proposition}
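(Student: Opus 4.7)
My plan is to combine two classical ingredients: the formula expressing factorial moments of counts in terms of correlation measures, and a binomial inclusion-exclusion identity for indicators of exact counts. The exponential tail bound of Proposition \ref{prop:sinekernel_tailbound} is what lets me pass from formal manipulations to rigorous identities.

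First I would prove the factorial moment identity
$$
\EE \prod_{j=1}^k (\varsigma(I_j))_{a_j} = C(I,a),
$$
where $(x)_a = x(x-1)\cdots(x-a+1)$ is the falling factorial. The idea is to apply the defining relation \eqref{eq:corrmeas_def} of the $A$-th correlation measure (with $A = |a|$) to the characteristic function of $I_1^{a_1} \times \cdots \times I_k^{a_k}$; since the $I_j$ are mutually disjoint, the sum over distinct $A$-tuples of indices factors as $\prod_j (\varsigma(I_j))_{a_j}$. That \eqref{eq:corrmeas_def} extends beyond $C_c(\R^A)$ to bounded measurable test functions (or is approximated arbitrarily well by such) follows from a routine truncation argument together with the finiteness of $C(I,a)$, as discussed in \cite[Prop. 3.2]{L2}. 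Dividing by $a! = a_1!\cdots a_k!$ then yields $\EE \prod_j \binom{\varsigma(I_j)}{a_j} = C(I,a)/a!$.

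Next I would apply the pointwise inclusion-exclusion identity, valid for any non-negative integers $X$ and $\lambda$,
$$
\ind[X = \lambda] = \sum_{n \geq 0} (-1)^n \binom{\lambda + n}{\lambda}\binom{X}{\lambda + n},
$$
which is a sum with only finitely many nonzero terms and is easily checked via Vandermonde's identity. Taking the product over $j$ and substituting $X_j = \varsigma(I_j)$ gives a representation of $\ind[\varsigma(I_j) = \lambda_j \text{ for all } j]$ as a sum indexed by $n \in \Z_{\geq 0}^k$. The key obstacle is justifying the termwise expectation. Using $\binom{\lambda+n}{\lambda}\binom{X}{\lambda+n} = \binom{X}{\lambda}\binom{X-\lambda}{n}$, the sum of absolute values of the summands is dominated by $\prod_j \binom{\varsigma(I_j)}{\lambda_j} 2^{\varsigma(I_j)-\lambda_j}$, which has finite expectation because Proposition \ref{prop:sinekernel_tailbound} (with the constant $c$ chosen larger than $\log 2$, as permitted by the remark following it) yields $\EE\, 2^{\varsigma(\cup_j I_j)} < \infty$. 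Fubini's theorem then legitimizes the interchange, and inserting the factorial moment formula converts each summand into $C(I,\lambda+n)/(\lambda+n)!$. Collecting factors $\binom{\lambda_j + n_j}{\lambda_j} = (\lambda_j+n_j)!/(\lambda_j!\, n_j!)$ gives exactly
$$
\PP(\varsigma(I_j) = \lambda_j,\, \forall j) = \frac{1}{\lambda!}\sum_{n \in \Z_{\geq 0}^k} \frac{(-1)^{|n|}}{n!} C(I, \lambda + n).
$$

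To finish, I would verify that the displayed series converges absolutely, which is a direct consequence of the upper bound $\det_{1 \leq i,j \leq A}(S(x_i - x_j)) \leq 1$ recalled in \eqref{eq:sineineq}: it gives $|C(I,\lambda+n)| \leq \prod_j |I_j|^{\lambda_j + n_j}$, so the sum is dominated by $\prod_j |I_j|^{\lambda_j} e^{|I_j|}$. The only substantive difficulty in this proof is the Fubini interchange, and the tail bound of Proposition \ref{prop:sinekernel_tailbound} is tailor-made to handle it; the remaining steps are formal manipulations with binomial coefficients and the definition of correlation measures.
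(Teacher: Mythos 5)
Your proof is correct, and it rests on exactly the same two pillars as the paper's: the factorial moment identity $\EE \prod_j (\varsigma(I_j))_{a_j} = C(I,a)$ obtained by testing the correlation measures against indicators of $I_1^{a_1}\times\cdots\times I_k^{a_k}$ (using disjointness to factor the sum over distinct tuples), and the exponential tail bound of Proposition \ref{prop:sinekernel_tailbound} (with the constant $c$ taken as large as needed) to justify an interchange of expectation with an infinite alternating sum. Where you diverge is in the packaging of the inversion step: the paper forms the probability generating function $\EE \prod_j z_j^{\varsigma(I_j)}$, expands in powers of $(z_j-1)$, rearranges the resulting double power series by dominated convergence, and pairs coefficients of $z^\lambda$; you instead apply the pointwise identity $\llbracket X=\lambda\rrbracket = \sum_{n\ge 0}(-1)^n\binom{\lambda+n}{\lambda}\binom{X}{\lambda+n}$ directly to $X_j=\varsigma(I_j)$ and take expectations term by term via Fubini, with domination coming from $\binom{\lambda+n}{\lambda}\binom{X}{\lambda+n}=\binom{X}{\lambda}\binom{X-\lambda}{n}$ and $\EE\,2^{\varsigma(\cup_j I_j)}\cdot\mathrm{poly}(\varsigma)<\infty$. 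Your route is essentially the coefficient extraction carried out by hand; it avoids generating functions and power-series rearrangement at the cost of having to verify the binomial inversion identity, while the paper's version makes the bookkeeping automatic once analyticity near $z=1$ is secured. Both are legitimate, and the final absolute-convergence check via \eqref{eq:sineineq} is the same in each. The only nitpick is that absorbing the polynomial factor $\prod_j\binom{\varsigma(I_j)}{\lambda_j}$ requires $c$ slightly larger than $\log 2$, but since the remark after Proposition \ref{prop:sinekernel_tailbound} lets you take $c$ arbitrarily large this is immaterial.
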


\begin{remark}
Knowing $\PP(\varsigma(I_1) = \lambda_1,..., \varsigma(I_k) = \lambda_k)$ for mutually disjoint intervals $I_1,...,I_k$ is of course enough to find such probabilities for $I_1,...,I_k$ which may have overlaps.
\end{remark}

\begin{proof}
Our proof follows that of \cite[Thm. 2]{S} until the final steps, but we include details for the convenience of readers. Note that
\begin{multline}
\label{eq:gen_series}
\prod_{j=1}^k z_j^{\varsigma(I_j)} = \prod_{j=1}^k (1 + z_j-1)^{\varsigma(I_j)} \\
= \prod_{j=1}^k \sum_{a_j=0}^\infty \frac{\varsigma(I_j)(\varsigma(I_j)-1)\cdots (\varsigma(I_j) - (a_j-1))}{a_j !} (z_j-1)^{a_j}.
\end{multline}
The sums in the last line are actually finite, over only $a_j \leq \varsigma(I_j)$ but it is more convenient to write them this way.

We note that for any $a \in \Z_{\geq 0}^k$,
$$
\EE \prod_{j=1}^k \varsigma(I_j)(\varsigma(I_j)-1)\cdots (\varsigma(I_j) - (a_j-1)) = C(I,a).
$$
This follows by applying the definition \eqref{eq:corrmeas_def} of correlation measures to a function $f$ given by an indicator function of the set $I_1^{a_1}\times \cdots \times I_k^{a_k}$. Furthermore, from \eqref{eq:sineineq}, we have
\begin{equation}
\label{eq:C_bound}
0 \leq C(I,a) \leq |I_1|^{a_1} \cdots |I_k|^{a_k}.
\end{equation}

Thus taking the expectation of \eqref{eq:gen_series}, and using the shorthand $z^\lambda = z_1^{\lambda_1}\cdots z_k^{\lambda_k}$ and $E(I,\lambda) =  \PP(\varsigma(I_1) = \lambda_1,..., \varsigma(I_k) = \lambda_k)$ we have for $z_1,...,z_k$ in a neighborhood of the origin,
\begin{multline*}
\sum_{\lambda \in \Z_{\geq 0}^k} E(I,\lambda) z^\lambda = \sum_{a \in \Z_{\geq 0}^k} \frac{C(I,a)}{a!} (z_1-1)^{a_1} \cdots (z_k-1)^{a_k} \\
= \sum_{\lambda, n \in \Z_{\geq 0}^k} \frac{C(I,\lambda+n)}{\lambda!\, n!} z^\lambda (-1)^{|n|},
\end{multline*}
where the interchange of expectation and summation, as well as the rearrangement of power series, is justified by dominated convergence and Proposition \ref{prop:sinekernel_tailbound} and \eqref{eq:C_bound}. Pairing coefficients now gives the claim.
\end{proof}

\subsection{On condition \emph{(iv)} of Theorem \ref{thm: main_zetazeros}: the distribution of spacings}
\label{subsec: spacings}

Condition \emph{(iv)} in Theorem \ref{thm: main_zetazeros} can now be stated more exactly. It is the statement that for any $K$, if $n$ is a random integer sampled uniformly from $1$ to $N$,
$$
(\zero_{n+1}-\zero_n, ..., \zero_{n+K}-\zero_n) \darrow (x_1(\varsigma^0),...,x_K(\varsigma^0)),
$$
where $\varsigma^0$ is the Palm process associated to the sine-kernel process.

\begin{remark}
There is a well-known closed formula for the distribution of $x_1(\varsigma^0)$:
$$
\PP(x_1(\varsigma^0) \leq s) = \int_0^s p_2(0,t)\, dt,
$$
where we define $p_2(0,t) = \tfrac{d^2}{dt^2} \det(1 - \mathbf{1}_{[0,t]} S \mathbf{1}_{[0,t]})$ and where $S:\,L^2(\R) \rightarrow L^2(\R)$ is given by $[Sf](x) = \int \frac{\sin \pi(x-y)}{\pi (x-y)} f(y)\, dy$, and where $\mathbf{1}_{[0,t]}:\,L^2(\R) \rightarrow L^2(\R)$ is the operator $[\mathbf{1}_{[0,t]} f](x) = \mathbf{1}_{[0,t]}(x) f(x)$.
This formula as well as formulas for the distribution of $x_{k+1}(\varsigma^0)$ for all $k$, in which the function $p_2(0,t)$ is replaced by a function $p_2(k,t)$, can be found in \cite[Eq. (6.4.31)]{Me}, though with somewhat heuristic derivations. (The parameter $2$ in $p_2(k,t)$ is used to indicate that these functions are associated to the Gaussian Unitary Ensemble. See also the last section of Odlyzko's paper~\cite{O} or Cloizeaux and Mehta's paper~\cite{CM}.) A rigorous derivation in terms of the Palm process for the distribution of $x_1(\varsigma^0)$ can be deduced from \cite[Prop 4.2.46]{AnGuZe}.
\end{remark}

\subsection{Moment bounds for zeta zeros}
\label{subsec:moment_zetazeros}

Our only input from number theory is a moment bound for counts of zeta zeros, due to A. Fujii. Set $S(t) = \tfrac{1}{\pi}\arg\, \zeta(1/2+it)$ (where as usual the argument is obtained by continuous variation along the rectilinear path joining $2$, $2+it$, $\frac12+it$, and where if $t$ is the ordinate of a zero define $S(t) = S(t+0)$), and recall the formula \cite[Thm. 9.3]{T}
\begin{equation}
\label{eq:S_errorterm}
N(t) = \frac{t}{2\pi} \log\frac{t}{2\pi} - \frac{t}{2\pi} + \frac{7}{8} + S(t)+O\Big(\frac{1}{t}\Big),
\end{equation}
for $t \geq 1$. 

\begin{theorem}[Fujii]
\label{thm:Fujii}
Let $h = h(T)$ be a bounded function of $T\geq 3$. Then for any fixed natural number $k$, we have for all $T\geq 3$,
$$
\frac{1}{T} \int_T^{2T} |S(t+h) - S(t)|^{2k}\, dt \leq C_k \log(3+ h \log T)^k,
$$
where the constant $C_k$ depends only on $k$.
\end{theorem}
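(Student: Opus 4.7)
The plan is to prove the estimate via the classical Selberg approach, expressing $S(t)$ as an oscillating prime-power sum plus an admissible error and then computing moments of the main Dirichlet polynomial directly. Concretely, for a parameter $X = T^{1/(4k)}$ Selberg's identity provides a truncated von Mangoldt weight $\Lambda_X(n)$, supported on $n \leq X^2$ and satisfying $|\Lambda_X(n)| \leq \Lambda(n)$, for which
$$\pi S(t) \;=\; -\sum_{n \leq X^2} \frac{\Lambda_X(n)\sin(t\log n)}{\sqrt{n}\,\log n} \;+\; E(t,X),$$
where a classical moment bound (essentially due to Selberg; see \cite{F,F1}) gives $\tfrac{1}{T}\int_T^{2T}|E(t,X)|^{2k}\,dt = O_k(1)$. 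Subtracting the identity at $t$ from the identity at $t+h$ expresses $\pi(S(t+h)-S(t))$ as the same Dirichlet polynomial with each coefficient multiplied by $\sin((t+h)\log n)-\sin(t\log n)$, plus the error difference $E(t+h,X)-E(t,X)$.

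Next I would compute the $2k$-th moment of the main term on $[T,2T]$ by expanding the $2k$-th power and integrating term by term. Using the standard near-orthogonality estimate for $(m_1\cdots m_k/n_1\cdots n_k)^{it}$, and the fact that our choice of $X$ ensures any nontrivial log-ratio exceeds $1/T$ in absolute value, the off-diagonal terms are negligible. The diagonal sum is organized via the usual combinatorial lemma for moments of Dirichlet polynomials (pairing the $n$'s with the $m$'s) and the identity $|e^{ih\log n}-1|^2 = 2-2\cos(h\log n)$, yielding a bound of the form
$$C_k\,\sigma^2(h,X)^k \quad \text{with} \quad \sigma^2(h,X) \;:=\; \sum_{n \leq X^2}\frac{\Lambda_X(n)^2\bigl(1-\cos(h\log n)\bigr)}{n\log^2 n}.$$
To estimate $\sigma^2(h,X)$, prime-power contributions with $n = p^r$, $r \geq 2$, sum to $O(1)$, so it suffices to bound $\sum_{p \leq X^2}(1-\cos(h\log p))/p$. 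Splitting at the threshold $\log p = 1/h$ and applying $1-\cos x \leq \min(x^2/2,\,2)$ together with Mertens' theorem, the small-prime piece contributes $O(1)$ (via partial summation against $\sum_{p\leq Y}(\log p)^2/p \ll (\log Y)^2$) and the large-prime piece contributes $2(\log\log X^2 - \log(1/h)) + O(1) \ll \log(3 + h\log T)$ for bounded $h$. Raising to the $k$-th power gives the desired bound for the main term.

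The main technical obstacle is the uniform moment bound for the error $E(t,X)$: it requires a careful analysis of the density of zeros close to the critical line (handling zeros off the critical line without assuming RH) and a convexity argument to absorb the tail of the Dirichlet polynomial beyond $X^2$. This is precisely the ingredient that Selberg originally developed and that Fujii reworked for the present $S(t+h)-S(t)$ application in \cite{F,F1}; once it is in hand, the target bound $C_k\log(3+h\log T)^k \geq C_k$ absorbs the $O_k(1)$ error contribution via Minkowski's inequality, completing the proof.
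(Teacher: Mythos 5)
Your proposal reconstructs the Selberg--Fujii argument, whereas the paper does not prove this theorem at all: its ``proof'' is purely a citation, noting that the bound is the main theorem of \cite{F} (with the boundedness condition on $h$ supplied by the correction in \cite{F1}), together with the observation that Fujii's restriction to $T \geq T_0$ can be removed for $3 \leq T \leq T_0$ by compactness. Your sketch of the main term is sound: the diagonal computation producing $\sigma^2(h,X) \ll \log(3+h\log T)$ via the split at $\log p = 1/h$ is exactly the mechanism behind the $\log(3+h\log T)^k$ shape, and with $X^{2k} \leq T^{1/2}$ the off-diagonal contribution is $O_k(T)$, which is absorbed since the target bound is $\gg_k T$. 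But be clear-eyed that your proposal is not more self-contained than the paper's citation where it matters: the uniform bound $\frac1T\int_T^{2T}|E(t,X)|^{2k}\,dt = O_k(1)$ for the error in Selberg's approximate formula is the entire technical content of Fujii's paper (zero-density input near the critical line, unconditional on RH), and you defer it to the same references the paper cites. So the honest summary is that you have correctly identified how the cited result is proved and verified the elementary half of it, while both you and the paper ultimately rest on \cite{F, F1} for the hard half. Two small points worth adding if you flesh this out: (a) the constant must be uniform down to $T=3$, which needs the compactness remark or care with $T_0$; (b) the absorption of the $O_k(1)$ error via Minkowski works because $\log(3+h\log T) \geq \log 3 > 1$, as you note.
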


We have taken $T \geq 3$ for the convenience of having $\log T \geq 1$.

\begin{proof}
This upper bound is a direct consequence of the main Theorem in \cite{F}, though the condition that $h$ must be bounded was missing there; this is corrected in \cite{F1}, from which we have taken the statement. See also \cite{MR}. Note that Fujii states this bound for $T \geq T_0$ for some large constant $T_0$, but the result remains true for $3 \leq T \leq T_0$ by compactness of this interval.
\end{proof}

\begin{corollary}
\label{cor:Fujii_cor1}
For any fixed constant $A > 0$ and any fixed $k \geq 0 $ we have for all $T\geq 3$,
$$
\frac{1}{T} \int_T^{2T} \Big| N\Big(t + \frac{A}{\log T}\Big) - N(t) \Big|^k\, dt \leq C_{A,k}
$$
where the constant $C_{A,k}$ depends only on $A$ and $k$.
\end{corollary}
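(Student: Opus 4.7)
The plan is to use the explicit formula \eqref{eq:S_errorterm} to reduce a bound on $N(t+h)-N(t)$ to a bound on $S(t+h)-S(t)$, and then invoke Theorem \ref{thm:Fujii}. Throughout I take $h = A/\log T$, which is bounded (by $A/\log 3$) and satisfies $h\log T = A$; these are exactly the hypotheses needed for Fujii's bound to give something useful.

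First I would separate $N$ into its smooth part and $S$. Setting $\Psi(t) = \tfrac{t}{2\pi}\log\tfrac{t}{2\pi} - \tfrac{t}{2\pi} + \tfrac{7}{8}$ and using \eqref{eq:S_errorterm},
$$
N(t+h) - N(t) = \big(\Psi(t+h)-\Psi(t)\big) + \big(S(t+h)-S(t)\big) + O(1/T),
$$
uniformly for $t \in [T,2T]$. Since $\Psi'(t) = \tfrac{1}{2\pi}\log\tfrac{t}{2\pi}$, the mean value theorem gives
$$
\big|\Psi(t+h)-\Psi(t)\big| \le \frac{h}{2\pi}\log\frac{t+h}{2\pi} \le \frac{A}{2\pi\log T}\log\frac{2T+1}{2\pi} \le C_A
$$
for all $T\ge 3$ and $t\in[T,2T]$. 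Combining the two displays yields the pointwise inequality
$$
|N(t+h)-N(t)| \le |S(t+h)-S(t)| + C_A,
$$
for a constant $C_A$ depending only on $A$ (absorbing the $O(1/T)$ error).

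Next I would raise this to the $k$-th power and integrate. Using $(a+b)^k \le 2^k(a^k+b^k)$,
$$
\frac{1}{T}\int_T^{2T}|N(t+h)-N(t)|^k\,dt \le \frac{2^k}{T}\int_T^{2T}|S(t+h)-S(t)|^k\,dt + 2^k C_A^k.
$$
By Jensen's inequality the first term on the right is at most
$$
2^k\left(\frac{1}{T}\int_T^{2T}|S(t+h)-S(t)|^{2k}\,dt\right)^{1/2},
$$
and Theorem \ref{thm:Fujii} (applied with exponent $2k$) bounds the integral inside the square root by $C_k \log(3+h\log T)^k = C_k \log(3+A)^k$, a constant depending only on $A$ and $k$. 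This yields the desired bound, with the edge case $k=0$ being trivial.

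There is no substantive obstacle: the whole point is that scaling the shift as $h = A/\log T$ makes the Fujii bound dimensionally trivial (the $\log(3+h\log T)$ factor becomes a constant), and the slow variation of $\log t$ forces the deterministic part $\Psi(t+h)-\Psi(t)$ to stay bounded on average-length windows. The only minor care needed is to check that the constant arising from $\Psi$ is genuinely uniform in $T\ge 3$, which is immediate from the mean value computation above.
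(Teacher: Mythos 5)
Your proposal is correct and follows essentially the same route as the paper: reduce to $S(t+h)-S(t)$ via \eqref{eq:S_errorterm}, bound the smooth main term by the mean value theorem using $h=A/\log T$, and invoke Theorem \ref{thm:Fujii}. The one small point is that Theorem \ref{thm:Fujii} is stated only for natural-number exponents, so for non-integer $k$ your Jensen step should use exponent $2\lceil k\rceil$ (raising the averaged integral to the power $k/(2\lceil k\rceil)$) rather than $2k$; the paper handles the same issue by instead splitting the integral according to whether $|N(t+\tfrac{A}{\log T})-N(t)|$ exceeds $1$.
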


\begin{proof}
	For even integers $k$ this is immediate from Theorem \ref{thm:Fujii} and \eqref{eq:S_errorterm}. For general $k$ the result follows separating the integral in two parts according to $| N(t + \frac{A}{\log T}) - N(t) |\leq1$ or $>1$.
\end{proof}

\begin{corollary}
\label{cor:Fujii_cor2}
For any fixed bounded interval $I$ and any fixed $k \geq 0$ we have for $T\geq 3$,
\begin{equation}\label{Eq:Fujii_cor2}
\frac{1}{T} \int_0^T \big| \{j:\, \zero_j-t\in I\}\big|^{k}\, dt \leq C_{I,k},
\end{equation}
where the constant $C_{I,k}$ depends only on $I$ and $k$.
\end{corollary}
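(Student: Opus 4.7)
The plan is to reduce the bound \eqref{Eq:Fujii_cor2} for the rescaled zeros $\zero_j$ to the bound of Corollary \ref{cor:Fujii_cor1} for the unrescaled zeros $\gamma_j$, by changing variables $t=\phi(s)$ with $\phi(s):=\vartheta(s)/\pi$ and summing dyadically in $s$. Writing $I=[a,b]$, the condition $\zero_j-t\in I$ becomes $\gamma_j\in[s^-,s^+]$, where $s^\pm:=\phi^{-1}(\phi(s)+b)$ and $\phi^{-1}(\phi(s)+a)$ respectively (and $s=\phi^{-1}(t)$), so the integrand becomes $(N(s^+)-N(s^-))^k$, which is precisely what Corollary \ref{cor:Fujii_cor1} controls on dyadic ranges of $s$.

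Concretely, one first fixes $s_*$ large enough that $\phi$ is strictly increasing on $[s_*,\infty)$ and $\phi'(s)=\tfrac{1}{2\pi}\log(s/2\pi)+O(1/s)$ by Stirling. The range $t\in[0,\phi(s_*)]$ contains only boundedly many $\zero_j$ meeting $\zero_j-t\in I$, so its contribution to $\tfrac{1}{T}\int_0^T\cdots dt$ is $O_I(1)$ for $T\geq 3$. For $t\in[\phi(s_*),T]$, substitute $t=\phi(s)$, $dt=\phi'(s)\,ds$; by the mean value theorem together with $\phi'(s)\asymp\log s$ one obtains $s^+-s^-\leq A/\log s$ for a constant $A=A(I)$. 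Next, decompose $[s_*,S]$, where $S:=\phi^{-1}(T)$, as $\bigcup_{v=0}^V[S/2^{v+1},S/2^v]$, with $V$ the largest integer such that $S/2^{V+1}\geq\max(s_*,3)$ (the leftover piece is again absorbed into the $O_I(1)$ error). On the $v$-th dyadic piece, setting $T':=S/2^{v+1}$, one has $s^+-s^-\leq A'/\log T'$ uniformly, so Corollary \ref{cor:Fujii_cor1} yields $\int_{T'}^{2T'}(N(s^+)-N(s^-))^k\,ds\leq C_{I,k}T'$. Since $\phi'(s)\lesssim \log T'$ uniformly on this piece, the $v$-th contribution is $\lesssim T'\log T' \lesssim (S/2^v)\log S$. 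Summing over $v$ yields a geometric series totaling $O(S\log S)=O(T)$, and dividing by $T$ gives the desired uniform bound.

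The main obstacle is not any single computation but careful bookkeeping around the change of variables: ensuring $\phi$ is monotone on the range considered, verifying that the bandwidth $A/\log s$ is uniformly bounded by a single $A'/\log T'$ on each dyadic piece (so that Corollary \ref{cor:Fujii_cor1} applies with one constant across all scales), and absorbing the small-$s$ regime where $\phi$ is irregular into an $O_I(1)$ error. Once these are set up correctly the dyadic sum is a clean geometric series, and no further number-theoretic input beyond Fujii's estimate is needed.
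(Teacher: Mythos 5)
Your proposal is correct and follows essentially the same route as the paper: both reduce the statement to Corollary \ref{cor:Fujii_cor1} by the change of variables $t=\vartheta(s)/\pi$, use the mean value theorem and $\vartheta'(s)\asymp\log s$ to convert the window $\zero_j-t\in I$ into a window $\gamma_j\in[x,x+O(1/\log T')]$, and sum dyadically (you decompose after changing variables, the paper before, which is an immaterial difference). The bookkeeping points you flag (monotonicity range of $\vartheta$, uniformity of the bandwidth constant across dyadic scales, the small-$t$ regime) are exactly the ones the paper's proof handles.
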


\begin{proof}
The inequality \eqref{Eq:Fujii_cor2} is equivalent for $I$ or a translated interval. Therefore, we may assume that $I=[0,a]$ for some constant $a$. 
By dyadic partition \eqref{Eq:Fujii_cor2} is equivalent to prove that  for some $T_0>0$ and  $T\ge T_0$ we have
$$
J:=\frac{1}{T} \int_T^{2T} \big| \{j:\, \zero_j-t\in I\}\big|^{k}\, dt \leq C_{I,k}.
$$
Since $\vartheta(x)=\frac{x}{2}\log\frac{x}{2\pi}-\frac{x}{2}+O(1)$, there are constants $x_0$ and $0<A<B$ such that $\vartheta(Ax/\log x)\le x\le \vartheta(Bx/\log x)$ for $x>x_0$. 

Changing variables $\vartheta(x)=t$ we get 
\[J
\le \frac{1}{T}\int_{AT/\log T}^{2BT/\log(2T)}\big| \{j:\, \vartheta(\gamma_j)-\vartheta(x)\in I\}\big|^{k} \vartheta'(x)\,dx.\]
Therefore, for some constant $C$ only depending on $T_0$, we have
\[J\le C \frac{\log T}{T}\int_{AT/\log T}^{2BT/\log T}\big| \{j:\, \vartheta(\gamma_j)-\vartheta(x)\in I\}\big|^{k} \,dx.\]
The condition $\vartheta(\gamma_j)-\vartheta(x)\in I$ is equivalent to $\vartheta(x)\le \vartheta(\gamma_j)\le \vartheta(x)+a$. By the mean value theorem, for $x>AT/\log T$ this implies $x\le \gamma_j\le x+C a /\log T$, with a constant $C$ only depending on $T_0$. It follows that 
\[J\le C \frac{\log T}{T}\int_{AT/\log T}^{2BT/\log T}\bigl| \{j:\, x\le \gamma_j\le Ca/\log T\}\big|^{k} \,dx.\]
This is bounded by a constant only depending on $k$ and $a=|I|$ by Proposition 31.
\end{proof}

\subsection{Proof of Theorem \ref{thm: main_zetazeros}}
\label{subsec:proof_of_zetathm}

We may now quickly prove Theorem \ref{thm: main_zetazeros}. To see that \emph{(i)} and \emph{(ii)} are equivalent we apply Theorem \ref{thm: respace_equiv}. Setting $L(t) = \tfrac{\log t}{2\pi}$, we have $N(T) \sim T\cdot L(T)$ and $L$ is slowly varying. Moreover for $\phi(t) = \tfrac{1}{\pi} \vartheta(t)$, we have $\phi(t) \sim t L(t)$, and for $\Phi(t) = \phi(t)/t$, (see Edwards \cite[eq. (1), p.~120]{E})
$$
\frac{\Phi'(t)}{\Phi(t)} = \frac{\frac{1}{2\pi t}+O(t^{-2})}{\frac{1}{2\pi}\log\frac{t}{2\pi}-\frac{1}{2\pi}+O(1/t)}=O(1/t\log t) = o(1/t).
$$
Thus Theorem \ref{thm: respace_equiv} applies and conditions \emph{(i)} and \emph{(ii)} of Theorem \ref{thm: main_zetazeros} are equivalent.

The equivalence between \emph{(ii)} and \emph{(iii)} of Theorem \ref{thm: main_zetazeros} then follows from the estimate of Proposition \ref{prop:sinekernel_tailbound} applied to Theorem \ref{thm:tail_bound_correlations}, and in the other direction follows from the Fujii bound Corollary \ref{cor:Fujii_cor2} applied to Theorem \ref{thm:tail_bound_distributions}.

Finally the equivalence between \emph{(iii)} and \emph{(iv)} follows from Theorem \ref{thm:spacing_to_palm}, recalling the sine-kernel process is simple.

\end{document}